\newenvironment{customthm}[1]
  {\innercustomthm}
  {\endinnercustomthm}
\newtheorem*{thm*}{Theorem}
\newtheorem{thm}{Theorem}
\newtheorem{lem}[thm]{Lemma}
\newtheorem{pro}[thm]{Proposition}
\newtheorem{obs}[thm]{Observation}
\newtheorem{cor}[thm]{Corollary}
\newtheorem{ques}[thm]{Question}
\newtheorem{proc}[thm]{Process}
\newcommand{\N}{\mathbb{N}}
\newcommand{\Ran}{\mathrm{Ran}}
\newcommand{\LL}{\mathcal{L}}
\begin{document}

\title{On the Equitable Choosability of the Disjoint Union of Stars}

\author{Hemanshu Kaul$^1$, Jeffrey A. Mudrock$^2$, and Tim Wagstrom$^3$}

\footnotetext[1]{Department of Applied Mathematics, Illinois Institute of Technology, Chicago, IL 60616. Email: {\tt {kaul@iit.edu}}}
\footnotetext[2]{Department of Mathematics, College of Lake County, Grayslake, IL 60030.  Email:  {\tt {jmudrock@clcillinois.edu}}}
\footnotetext[3]{Department of Mathematics and Applied Mathematical Sciences, University of Rhode Island, Kingston, RI 02881. Email: {\tt{twagstrom@uri.edu}}}

\date{}

\maketitle

\begin{abstract}

%In 2003 Kostochka, Pelsmajer, and West introduced a list analogue of equitable coloring called equitable choosability.  A $k$-assignment $L$ for a graph $G$ assigns a list $L(v)$ of $k$ available colors to each $v \in V(G)$, and an equitable $L$-coloring of $G$ is a proper coloring $f$ of $G$ such that $f(v) \in L(v)$ for each $v \in V(G)$ and each color class of $f$ has size at most $\lceil |V(G)|/k \rceil$.  Graph $G$ is said to be equitably $k$-choosable if an equitable $L$-coloring of $G$ exists whenever $L$ is a $k$-assignment for $G$.

Equitable $k$-choosability is a list analogue of equitable $k$-coloring that was introduced by Kostochka, Pelsmajer, and West in 2003. It is known that if vertex disjoint graphs $G_1$ and $G_2$ are equitably $k$-choosable, the disjoint union of $G_1$ and $G_2$ may not be equitably $k$-choosable. Given any $m \in \N$ the values of $k$ for which $K_{1,m}$ is equitably $k$-choosable are known.  Also, a complete characterization of equitably $2$-choosable graphs is not known. With these facts in mind, we study the equitable choosability of $\sum_{i=1}^n K_{1,m_i}$, the disjoint union of $n$ stars.  We show that determining whether $\sum_{i=1}^n K_{1,m_i}$ is equitably choosable is NP-complete when the same list of two colors is assigned to every vertex. We completely determine when the disjoint union of two stars (or $n \geq 2$ identical stars) is equitably 2-choosable, and we present results on the equitable $k$-choosability of the disjoint union two stars for arbitrary $k$.

%We show that determining whether $\sum_{i=1}^n K_{1,m_i}$ has an equitable $L$-coloring is NP-complete when $L$ is a 2-assignment assigning the same list of colors to every vertex. We completely determine when the disjoint union of 2 stars (or $n \geq 2$ identical stars) is equitably 2-choosable, and we present results on the equitable $k$-choosability of two stars for arbitrary $k$.

\medskip

\noindent {\bf Keywords.} graph coloring, equitable coloring, list coloring, equitable choosability.

\noindent \textbf{Mathematics Subject Classification.} 05C15, 68R10.

\end{abstract}

\section{Introduction}\label{intro}

In this paper all graphs are nonempty, finite, simple graphs unless otherwise noted.  Generally speaking we follow~\cite{GJ79} and~\cite{W01} for terminology and notation.  The set of natural numbers is $\N = \{1,2,3, \ldots \}$.  For $m \in \N$, we write $[m]$ for the set $\{1, \ldots, m \}$.  If $G$ is a graph and $S \subseteq V(G)$, we use $G[S]$ for the subgraph of $G$ induced by $S$.  We write  $\Delta(G)$ for the maximum degree of a vertex in $G$.  We write $K_{n,m}$ for complete bipartite graphs with partite sets of size $n$ and $m$.  When $G_1$ and $G_2$ are vertex disjoint graphs, we write $G_1 + G_2$ or $\sum_{i=1}^2 G_i$ for the disjoint union of $G_1$ and $G_2$.  When $f$ is a function, we use $\Ran(f)$ to denote the range of $f$.

In this paper we study a list analogue of equitable coloring known as equitable choosability which was introduced in 2003 by Kostochka, Pelsmajer, and West~\cite{KP03}.  More specifically, we study the equitable choosability of the disjoint union of stars.  A \emph{star} is a complete bipartite graph with partite sets of size 1 and $m$ where $m \in \N$ (i.e., a copy of $K_{1,m}$).  We will occasionally need to consider complete bipartite graphs that are copies of $K_{1,0}$.  In such cases, we assume $K_{1,0} = K_1$ (i.e., a complete bipartite graph with partite sets of size 1 and 0 is a complete graph on one vertex).   We will now briefly review equitable coloring and list coloring.

\subsection{Equitable Coloring and List Coloring}

\subsubsection{Equitable Coloring}

Equitable coloring is a variation on the classical vertex coloring problem that began with a conjecture of Erd\H{o}s~\cite{E64} in~1964 which was proved in 1970 by Hajn\'{a}l and Szemer\'{e}di~\cite{HS70}.  In 1973 the notion of equitable coloring was formally introduced by Meyer~\cite{M73}.  A proper $k$-coloring $f$ of a graph $G$ is said to be an \emph{equitable $k$-coloring} if the $k$ color classes associated with $f$ differ in size by at most 1.  It is easy to see that for an equitable $k$-coloring, the color classes associated with the coloring are each of size $\lceil |V(G)|/k \rceil$ or $\lfloor |V(G)|/k \rfloor$.  We say that a graph $G$ is \emph{equitably $k$-colorable} if there exists an equitable $k$-coloring of $G$. Equitable colorings are useful when it is preferable to form a proper coloring without under-using or over-using any color (see~\cite{JR02,KJ06,P01,T73} for applications).

Unlike the typical vertex coloring problem, if a graph is equitably $k$-colorable, it need not be equitably $(k+1)$-colorable.  Indeed, $K_{2m+1,2m+1}$ is equitably $k$-colorable for each even $k$ less than $2m+1$, it is not equitably $(2m+1)$-colorable, and it is equitably $k$-colorable for each $k \geq 2m+2 = \Delta(K_{2m+1,2m+1})+1$ (see~\cite{LW96} for further details). In 1970, Hajn\'{a}l and Szemer\'{e}di~\cite{HS70} proved:
Every graph $G$ has an equitable $k$-coloring when $k \geq \Delta(G)+1$. In 1994, Chen, Lih, and Wu~\cite{CL94} conjectured that this result can be improved by 1 for most connected graphs by characterizing the extremal graphs as: $K_m$, $C_{2m+1}$, and $K_{2m+1,2m+1}$.  Their conjecture is still open and is known as the $\Delta$-Equitable Coloring Conjecture ($\Delta$-ECC for short).

Importantly, when it comes to the disjoint union of graphs, equitable $k$-colorings on components can be merged after appropriately permuting color classes within each component to obtain an equitable $k$-coloring of the whole graph.

\begin{thm}[\cite{YZ97}] \label{thm: disjoint}
Suppose $G_1$, $G_2$, \ldots, and $G_t$ are pairwise vertex disjoint graphs and $G = \sum_{i=1}^t G_i$.  If $G_i$ is equitably $k$-colorable for all $i \in [t]$, then $G$ is equitably $k$-colorable.
\end{thm}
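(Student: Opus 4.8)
The plan is to proceed by induction on $t$, so it suffices to handle the case $t = 2$; the general case then follows by grouping $G_1 + \cdots + G_{t-1}$ as one graph and $G_t$ as the other. So suppose $G_1$ and $G_2$ are vertex disjoint, each equitably $k$-colorable, and let $f_1$ and $f_2$ be equitable $k$-colorings of $G_1$ and $G_2$ respectively. The difficulty is that simply taking the union $f_1 \cup f_2$ need not be equitable: if $|V(G_1)| \equiv r_1 \pmod k$ and $|V(G_2)| \equiv r_2 \pmod k$, then in $f_1$ exactly $r_1$ of the color classes are ``large'' (size $\lceil |V(G_1)|/k \rceil$) and $k - r_1$ are ``small'', and similarly for $f_2$. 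If a large class of $f_1$ happens to line up with a large class of $f_2$ while a small one lines up with a small one, the combined class sizes can differ by $2$. The key observation is that each color class of $f_i$ is an independent set, and permuting which color is assigned to which class of a proper coloring yields another proper coloring; so we are free to relabel the classes of $f_2$ by any permutation of $[k]$ without affecting properness on $G_2$, and since $G_1$ and $G_2$ share no vertices this does not disturb $f_1$.

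The main step is therefore a combinatorial matching argument: I want to permute the classes of $f_2$ so that, after merging, all combined class sizes differ by at most $1$. Write $a_i = \lceil |V(G_i)|/k \rceil$ and $b_i = \lfloor |V(G_i)|/k \rfloor$, so $a_i - b_i \in \{0,1\}$, and $f_i$ has $r_i$ classes of size $a_i$ and $k - r_i$ classes of size $b_i$. I would relabel so that the large classes of $f_1$ are colors $1, \ldots, r_1$, and choose the permutation of $f_2$ so that its $r_2$ large classes are placed on colors $r_1 + 1, r_1 + 2, \ldots$ (cyclically mod $k$) — that is, pair small-with-large as much as possible. A short case analysis on whether $r_1 + r_2 \le k$ or $r_1 + r_2 > k$ then shows the merged class sizes take only two consecutive values: when $r_1 + r_2 \le k$ the possible merged sizes are $a_1 + b_2$, $b_1 + a_2$, and $b_1 + b_2$, and one checks these lie within a window of width $1$ using $a_i \le b_i + 1$; when $r_1 + r_2 > k$ some colors are large in both, giving sizes $a_1 + a_2$, $a_1 + b_2$, $b_1 + a_2$, again within a width-$1$ window. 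In all cases the number of vertices is $|V(G_1)| + |V(G_2)|$, so the sizes are forced to be $\lceil (|V(G_1)|+|V(G_2)|)/k\rceil$ and $\lfloor (|V(G_1)|+|V(G_2)|)/k\rfloor$, confirming equitability.

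The bulk of the work is the bookkeeping in that case analysis, but it is routine once the pairing rule is fixed; the one genuine idea is that relabeling color classes is free on a disjoint component. I expect the cleanest writeup to avoid explicit mod arithmetic by instead arguing directly about the multiset of merged sizes: after merging, let $s$ be the largest merged class size and $s'$ the smallest; it suffices to show $s - s' \le 1$. Since we matched large classes of one coloring against small classes of the other whenever possible, a merged class of size $s$ can only be ``large + large'' if $r_1 + r_2 > k$, and then there were at least $r_1 + r_2 - k$ such colors, while the remaining colors are ``large + small'' or ``small + small'' — and one shows a ``small + small'' color exists only if $r_1 + r_2 < k$, so the two extreme types never coexist. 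Filling in these inequalities using $a_i \in \{b_i, b_i+1\}$ completes the argument, and then induction on $t$ finishes the proof. Finally I would remark that this is precisely the phenomenon contrasted with equitable choosability in the introduction: there the analogous relabeling is not available because the lists are vertex-specific, which is why Theorem~\ref{thm: disjoint} has no list analogue in general.
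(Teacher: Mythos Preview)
The paper does not actually prove this theorem; it is stated with a citation to~\cite{YZ97} as background, and the only accompanying remark is the sentence just before it: ``equitable $k$-colorings on components can be merged after appropriately permuting color classes within each component to obtain an equitable $k$-coloring of the whole graph.'' Your proposal is correct and is exactly the standard argument that sentence alludes to: reduce to $t=2$, permute the classes of $f_2$ so its large classes align with the small classes of $f_1$ whenever possible, and then the short case split on $r_1+r_2 \le k$ versus $r_1+r_2 > k$ verifies that the merged class sizes lie in a window of width~$1$. Your closing remark about why this fails for equitable choosability is also apt and matches the paper's own commentary.
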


On the other hand, an equitably $k$-colorable graph may have components that are not equitably $k$-colorable; for example, the disjoint union $G=K_{3,3}+K_{3,3}$ with $k=3$.  With this in mind, Kierstead and Kostochka~\cite{KK10} proposed an extension of the $\Delta$-ECC to the disjoint union of graphs.

\subsubsection{List Coloring}

List coloring is another variation on the classical vertex coloring problem introduced independently by Vizing~\cite{V76} and Erd\H{o}s, Rubin, and Taylor~\cite{ET79} in the 1970s.  For list coloring, we associate a \emph{list assignment} $L$ with a graph $G$ such that each vertex $v \in V(G)$ is assigned a list of colors $L(v)$ (we say $L$ is a list assignment for $G$).  The graph $G$ is \emph{$L$-colorable} if there exists a proper coloring $f$ of $G$ such that $f(v) \in L(v)$ for each $v \in V(G)$ (we refer to $f$ as a \emph{proper $L$-coloring} of $G$).  A list assignment $L$ is called a \emph{$k$-assignment} for $G$ if $|L(v)|=k$ for each $v \in V(G)$.  We say $G$ is \emph{k-choosable} if $G$ is $L$-colorable whenever $L$ is a $k$-assignment for $G$.

Suppose that $L$ is a list assignment for a graph $G$.  A \emph{partial $L$-coloring} of $G$ is a function $f: D \rightarrow \cup_{v \in V(G)} L(v)$ such that $D \subseteq V(G)$, $f(v) \in L(v)$ for each $v \in D$, and $f(u) \neq f(v)$ whenever $u$ and $v$ are adjacent in $G[D]$.  Also, the \emph{palette of colors associated with $L$} is $\cup_{v \in V(G)} L(v)$.  From this point forward, we use $\mathcal{L}$ to denote the palette of colors associated with $L$ whenever $L$ is a list assignment.  We say that $L$ is a \emph{constant $k$-assignment} for $G$ when $L$ is a $k$-assignment for $G$ and $|\mathcal{L}|=k$ (i.e., $L$ assigns the same list of $k$ colors to every vertex in $V(G)$).

\subsection{Equitable Choosability}

In 2003 Kostochka, Pelsmajer, and West~\cite{KP03} introduced a list analogue of equitable coloring called equitable choosability.  They used the word equitable to capture the idea that no color may be used excessively often.  If $L$ is a $k$-assignment for a graph $G$, a proper $L$-coloring of $G$ is an \emph{equitable $L$-coloring} of $G$ if each color in $\mathcal{L}$ appears on at most $\lceil |V(G)|/k \rceil$ vertices.  We call $G$ \emph{equitably $L$-colorable} when an equitable $L$-coloring of $G$ exists, and we say $G$ is \emph{equitably $k$-choosable} if an equitable $L$-coloring of $G$ exists for every $L$ that is a $k$-assignment for $G$.  It is conjectured in~\cite{KP03} that the Hajn\'{a}l-Szemer\'{e}di Theorem and the $\Delta$-ECC hold in the context of equitable choosability.

%\begin{conj}[\cite{KP03}] \label{conj: KPW1}
%Every graph $G$ is equitably $k$-choosable when $k \geq \Delta(G)+1$.
%\end{conj}

%\begin{conj}[\cite{KP03}] \label{conj: KPW2}
%A connected graph $G$ is equitably $k$-choosable for each $k \geq \Delta(G)$ if it is different from $K_m$, $C_{2m+1}$, and $K_{2m+1,2m+1}$.
%\end{conj}

Much of the research on equitable choosability has been focused on  these conjectures.  There is not much research that considers the equitable $k$-choosability of a graph $G$ when $k < \Delta(G)$.  In~\cite{KP03} it is shown that if $G$ is a forest and $k \geq 1 + \Delta(G)/2$, then $G$ is equitably $k$-choosable.  It is also shown that this bound is tight for forests.  Also, in~\cite{KM18}, it is conjectured that if $T$ is a total graph, then $T$ is equitably $k$-choosable for each $k \geq \max \{\chi_\ell(T), \Delta(T)/2 + 2 \}$ where $\chi_\ell(T)$, the list chromatic number of $T$, is the smallest $m$ such that $T$ is $m$-choosable.  Finally, in~\cite{KM20}, it is remarked that determining precisely which graphs are equitably 2-choosable is open.

Furthermore, most results about equitable choosability state that some family of graphs is equitably $k$-choosable for all $k$ above some constant; even though, as with equitable coloring, if $G$ is equitably $k$-choosable, it need not be equitably $(k+1)$-choosable.  It is rare to have a result that determines whether a family of graphs is equitably $k$-choosable for each $k \in \N$.  Two examples of results of this form are: $K_{1,m}$ is equitably $k$-choosable if and only if $ m \leq \left\lceil (m+1)/k \right \rceil (k-1)$, and $K_{2,m}$ is equitably $k$-choosable if and only if $m \leq \left \lceil (m+2)/k \right \rceil (k-1)$ (see~\cite{MC19}).

It is important to note that the analogue of Theorem~\ref{thm: disjoint} does not hold in the setting of equitable choosability.  For example, we know $K_{1,6}$ and $K_{1,1}$ are equitably 3-choosable, but $K_{1,6} + K_{1,1}$ is not equitably 3-choosable.\footnote{One need only consider a constant 3-assignment to see this.} This fact along with the fact that the equitable choosability of $K_{1,m}$ has been completely characterized motivated us to study the following question which is the focus of this paper.

\begin{ques} \label{ques: main}
Suppose $n \geq 2$.  For which $k, m_1, \ldots, m_n \in \N$ is $\sum_{i=1}^n K_{1, m_i}$ equitably $k$-choosable?
\end{ques}

Since $\sum_{i=1}^n K_{1, m_i}$ is a forest, we know that it is equitably $k$-choosable whenever $k \geq 1 + \Delta(\sum_{i=1}^n K_{1, m_i})/2 = 1 + \max_{i \in [n]} m_i/2$. Even for this simple class of graphs, we do not know what happens when $k$ is smaller than $1 + \max_{i \in [n]} m_i/2$. In this paper, we make some further progress on Question~\ref{ques: main} in the case when $k=2$ and in the case when $n=2$.  We completely answer Question~\ref{ques: main} in the case when $n=k=2$. This can be seen as progress towards understanding which graphs are equitably $2$-choosable.

\subsection{Outline of the Paper and Open Questions}

We begin by studying Question~\ref{ques: main} in the case of equitable 2-choosability.  In Section~\ref{NP} we study the complexity of the decision problem STARS EQUITABLE 2-COLORING which is defined as follows.
\\
\\
\noindent \emph{Instance:}  An $n$-tuple $(m_1, \ldots, m_n)$ such that $m_i \in \N$ for each $i \in [n]$.
\\
\\
\noindent \emph{Question:} Is $\sum_{i=1}^n K_{1, m_i}$ equitably 2-colorable?\\

Perhaps surprisingly, since most coloring problems with 2 colors tend to be easy, we show that STARS EQUITABLE 2-COLORING is NP-complete. In studying when $\sum_{i=1}^n K_{1,m_i}$ is equitably 2-choosable, a possible natural starting point is to try to determine: for which $n$-tuples $(m_1, \ldots, m_n)$ is $\sum_{i=1}^n K_{1,m_i}$ not equitably 2-colorable and hence not equitably 2-choosable?  The fact that STARS EQUITABLE 2-COLORING is NP-complete tells us that this ``natural starting point" should not be pursued unless P $=$ NP.

STARS EQUITABLE 2-CHOOSABLITY is the decision problem whose instances are the same as STARS EQUITABLE 2-COLORING, but it asks the question: Is $\sum_{i=1}^n K_{1, m_i}$ equitably 2-choosable?  Clearly, this decision problem is closely related to Question~\ref{ques: main} in the case when $k=2$. The following question is open.

\begin{ques} \label{ques: 2choose}
Is STARS EQUITABLE 2-CHOOSABLITY NP-hard?
\end{ques}

In Section~\ref{2choose} we completely characterize when the disjoint union of 2 stars is equitably 2-choosable by proving the following.

\begin{thm}\label{thm: 2star}
Let $G= K_{1,m_1} + K_{1,m_2}$ where $1 \leq m_1 \leq m_2$. $G$ is equitably 2-choosable if and only if $m_2-m_1 \leq 1$ and $m_1+m_2 \leq 15$.
\end{thm}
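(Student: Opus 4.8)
The statement is an "if and only if," so I would split it into the necessity direction (if $G$ is equitably $2$-choosable then $m_2 - m_1 \le 1$ and $m_1 + m_2 \le 15$) and the sufficiency direction (if both numerical conditions hold then every $2$-assignment admits an equitable coloring). For necessity I would produce explicit bad $2$-assignments. When $m_2 - m_1 \ge 2$, note that $|V(G)| = m_1 + m_2 + 2$, so in an equitable $2$-coloring each color class has size $\lceil |V(G)|/2 \rceil$ or $\lfloor |V(G)|/2 \rfloor$; the centers of the two stars pull strongly against the leaves, and using the constant $2$-assignment (which by Theorem~\ref{thm: disjoint}-type reasoning reduces to equitable $2$-\emph{colorability}) one checks the balance constraint fails — this is essentially the STARS EQUITABLE 2-COLORING obstruction specialized to $n=2$. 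For $m_1 + m_2 \ge 16$ I would design a non-constant $2$-assignment: give the two centers disjoint lists, say $L(c_1) = \{1,2\}$ and $L(c_2) = \{3,4\}$, and distribute leaf lists among $\{1,2\},\{3,4\}$ (and possibly overlapping pairs) so that whichever colors the centers take, one color is forced onto too many leaves to stay within $\lceil |V(G)|/2 \rceil$. The threshold $15$ should emerge from optimizing this construction.

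For sufficiency, assume $m_2 - m_1 \le 1$ and $m_1 + m_2 \le 15$, and fix an arbitrary $2$-assignment $L$ with palette $\mathcal{L}$. Let $N = m_1 + m_2 + 2$, so the equitable bound is $B = \lceil N/2 \rceil$; since $N \le 17$, we have $B \le 9$. The plan is a short case analysis on the center lists $L(c_1), L(c_2)$: (i) if $L(c_1) \cap L(c_2) = \emptyset$, color $c_i$ with either color of $L(c_i)$, then each leaf of star $i$ has at least one admissible color avoiding $f(c_i)$, and we have genuine freedom — one free color per leaf on star $i$, drawn from $L(c_i)$'s complement-of-$f(c_i)$ plus whatever its own list offers; (ii) if the center lists share a color or coincide, pick center colors to be distinct when possible. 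In every case each leaf $v$ in star $i$ has a nonempty set of "allowed" colors (those in $L(v) \setminus \{f(c_i)\}$, of size $\ge 1$), and we must choose one allowed color per leaf so that no color is used more than $B$ times overall (the centers contribute at most $2$ uses total). This is a bipartite degree-constrained selection problem; since $|\mathcal{L}|$ is small and each leaf has $1$ or $2$ choices, I would argue greedily: process colors from most-demanded to least, or invoke a simple Hall-type / flow argument showing that with $N \le 17$ and $B = \lceil N/2 \rceil$ the only way to be stuck is to have more than $B$ leaves whose \emph{only} allowed color agrees — and that forces $m_2 - m_1 \ge 2$ or $m_1 + m_2 \ge 16$, contradiction. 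The bookkeeping is finite and mechanical once the center colors are fixed.

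The main obstacle is the sufficiency direction's leaf-distribution argument: one has to rule out \emph{every} $2$-assignment, and a naive greedy coloring can fail if too many leaves share a single admissible color after the centers are colored. The delicate point is choosing the center colors wisely — a leaf of star $i$ loses a choice only if $f(c_i)$ lies in its list, so the right move is to pick $f(c_1), f(c_2)$ to minimize the number of "monochromatically trapped" leaves, and then show the residual demand on each color is at most $B$. Pinning down that the worst case is exactly $m_1 + m_2 = 15$ (i.e., that $16$ breaks and $15$ survives) will require carefully tracking the parity of $N$ and the interaction between the two stars' leaf counts; I expect this to be where the constant $15$ (rather than, say, $14$ or $16$) is genuinely forced, and it is the step most likely to need a careful, slightly tedious subcase split rather than a slick argument.
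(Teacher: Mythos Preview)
Your necessity sketch for $m_2 - m_1 \ge 2$ (constant $2$-assignment) matches the paper. For $m_1 + m_2 \ge 16$, however, the paper does \emph{not} give the two centers disjoint lists; instead one entire star receives the constant list $\{1,2\}$, the other star's center receives $\{3,4\}$, and eight of that star's leaves receive the four ``mixed'' pairs $\{1,3\},\{1,4\},\{2,3\},\{2,4\}$ twice each. This forces, for every choice of center colors, two extra leaves into whichever of $1,2$ the constant star already saturates. Your ``disjoint center lists'' idea is plausible but you have not exhibited leaf lists that make it work, and the threshold $15$ will not simply ``emerge'' without a concrete construction.

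The real gap is in the sufficiency direction. Your claim that after fixing center colors ``the only way to be stuck is to have more than $B$ leaves whose \emph{only} allowed color agrees'' is the $|S|=1$ case of the relevant Hall condition; you also need that for every color set $S$, at most $B|S|$ leaves have both list colors (after deleting the adjacent center color) inside $S$. More importantly, you must show that \emph{some} choice of the four possible center colorings makes the full Hall condition hold, and you give no mechanism for this beyond ``minimize trapped leaves.'' The paper's proof shows this is where essentially all the work lives: it proceeds by induction on $m$ for $K_{1,m}+K_{1,m}$, removing one leaf from each star, lifting an equitable coloring of $K_{1,m-1}+K_{1,m-1}$, and then going through seven successive structural observations about $L$ that either produce an equitable $L$-coloring or force $L(w_0)\cap L(u_0)=\emptyset$. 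That final disjoint-center case is handled by a separate lemma with its own extremal argument on the palette size. None of this is ``finite and mechanical once the center colors are fixed''; the repeated recoloring of previously colored vertices is essential, and a one-shot centers-then-Hall approach does not obviously capture it.
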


Theorem~\ref{thm: 2star} makes progress on the task of identifying which graphs are equitably 2-choosable which in general is open (see~\cite{KM20}).  It is also worth noting that $K_{1,m_1} + K_{1,m_2}$ is equitably 2-colorable if and only if $|m_2-m_1| \leq 1$.  So, there are infinitely many equitably 2-colorable graphs that are the disjoint union of two stars, but there are only 14 equitably 2-choosable graphs (up to isomorphism) that are the disjoint union of two stars.  We end Section~\ref{2choose} by completely determining when the disjoint union of $n$ identical stars is equitably 2-choosable.

\begin{thm} \label{thm: same}
Suppose $n,m \in \N$, $n \geq 2$, and $G = \sum_{i=1}^n K_{1,m}$.  When $n$ is odd, $G$ is equitably 2-choosable if and only if $m \leq 2$.  When $n$ is even, $G$ is equitably 2-choosable if and only if $m \leq 7$.
\end{thm}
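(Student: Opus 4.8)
The plan is to analyze the two parity cases separately, using constant $2$-assignments to produce the non-choosability constructions and a direct combinatorial argument for the positive direction. Throughout, write $G = \sum_{i=1}^n K_{1,m}$, so $|V(G)| = n(m+1)$, and for any $2$-assignment $L$ an equitable $L$-coloring must use each color of $\mathcal{L}$ on at most $\lceil n(m+1)/2 \rceil$ vertices.

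\textbf{The negative directions.} For $n$ odd with $m \geq 3$, I would exhibit a constant $2$-assignment $L$ with $\mathcal{L} = \{1,2\}$ that admits no equitable $L$-coloring. Here $|V(G)| = n(m+1)$ is even when $m$ is odd and odd when $m$ is even, so the equitable bound is $\lceil n(m+1)/2\rceil$; the point is that in each star $K_{1,m}$, a proper coloring with two colors forces the $m$ leaves to receive the color \emph{not} used on the center, so the center's color is used once and the other color $m$ times within that component. Summing over components, if $a$ centers get color $1$ and $n-a$ get color $2$, color $1$ is used $a + (n-a)m$ times and color $2$ is used $(n-a) + am$ times. One then checks that for $m \geq 3$ and $n$ odd, no choice of $a \in \{0,1,\dots,n\}$ makes both counts at most $\lceil n(m+1)/2\rceil$ — the two counts are too far apart because the multiplier $m$ is large and $n$ odd prevents the balanced split. (For $m \le 2$ the positive direction below handles everything.) For $n$ even with $m \geq 8$, I would similarly use a constant $2$-assignment: even though now $a = n/2$ is available and balances the constant assignment, one must also defeat \emph{every} $2$-assignment, and the natural move is to mimic the extremal $2$-assignment that already shows $K_{1,8}$ (or the relevant pair sum $\ge 16$ from Theorem~\ref{thm: 2star}) fails — indeed $G \supseteq K_{1,m} + K_{1,m}$ as a spanning subgraph of two of its components plus isolated-ish leftover stars, so I would reduce to Theorem~\ref{thm: 2star}: pick $L$ on two of the components realizing the obstruction there, and extend $L$ arbitrarily (say constantly) on the remaining $n-2$ components, arguing the extra components cannot rescue equitability because they only add slack symmetrically. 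The cleanest route is: if $\sum_{i=1}^n K_{1,m}$ with $n$ even were equitably $2$-choosable, then so would be $K_{1,m}+K_{1,m}$ (by restricting an equitable coloring to two components and checking the per-color bound is inherited, using that removing $n-2$ balanced components of even total order preserves the ceiling bound), contradicting $m+m = 2m \ge 16$ in Theorem~\ref{thm: 2star}.

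\textbf{The positive directions.} For $n$ odd and $m \in \{1,2\}$: $K_{1,1}$ and $K_{1,2}$ are each equitably $2$-choosable (known/easy), but the disjoint union is not automatically so, so I would directly take an arbitrary $2$-assignment $L$ and build an equitable coloring component-by-component while tracking a running deficit vector $(c_1, c_2)$ of how many times each color has been used; the small values $m \le 2$ give each component order $\le 3$, so each component can be colored flexibly (in a star of order $2$ or $3$ one can often choose which color plays the ``center'' role, or use $L$'s freedom when the two lists on center and leaves are not identical), and a greedy/balancing argument keeps $|c_1 - c_2|$ bounded by a small constant, finishing within the ceiling. For $n$ even and $m \le 7$: here $|V(G)| = n(m+1)$ is even, the target is each color used at most $n(m+1)/2$ times, and the key structural fact is that pairing up the $n$ components into $n/2$ disjoint copies of $K_{1,m}+K_{1,m}$ reduces everything to Theorem~\ref{thm: 2star}, which guarantees each such pair (with $2m \le 14 \le 15$ and difference $0 \le 1$) is equitably $2$-colorable under the induced $2$-assignment — \emph{but} one must merge the per-pair equitable colorings into a global one. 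By an argument in the spirit of Theorem~\ref{thm: disjoint}, after obtaining an equitable $L$-coloring on each pair I would permute colors within pairs (each pair uses a palette of size $\le 2$, so the only nontrivial permutation is the swap) to balance the global color counts; since each pair contributes $m+1$ to each of its (at most two) colors in amounts differing by at most... here the merging is delicate because list constraints forbid arbitrary color permutations.

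\textbf{Main obstacle.} The hard part will be the positive direction for $n$ even: Theorem~\ref{thm: disjoint}-style merging relies on freely permuting color classes within components, which is illegal under list coloring, so I expect to need a more careful argument — either a direct global greedy that processes all $n$ components at once while maintaining that the gap between the two most-used colors stays below $2$ (exploiting that within any star the ``minority'' color is used once), or an Eulerian/parity argument on the bipartite ``component–color'' incidence structure to show the slack from having $n/2 \ge 1$ pairs is always enough. Establishing the exact threshold $m \le 7$ (equivalently $2m \le 15$, matching Theorem~\ref{thm: 2star}) in this merged setting, rather than something weaker like $2m \le 14$, is where the real work lies, and I would expect to lean heavily on the constructions and case analysis already developed for Theorem~\ref{thm: 2star}.
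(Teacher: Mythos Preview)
Your perceived ``main obstacle'' is not an obstacle. For $n$ even and $m \le 7$, pair the components into $n/2$ copies of $K_{1,m}+K_{1,m}$ and apply Theorem~\ref{thm: 2star} to each pair under the restricted $2$-assignment: this yields on each pair a proper $L$-coloring using every color at most $m+1$ times. Now simply take the union of these colorings. Each color is used at most $(n/2)(m+1) = n(m+1)/2 = \rho(G,2)$ times globally, so the union is already an equitable $L$-coloring of $G$ --- no permutation of color classes is ever needed, because the per-pair ceiling $m+1$ sums \emph{exactly} to the global ceiling (here $n(m+1)$ is even, so the ceiling is exact). Your worry about ``$2m \le 14$ versus $2m \le 15$'' is also moot: $m \le 7$ gives $2m \le 14 \le 15$, so Theorem~\ref{thm: 2star} applies on the nose and no sharper threshold is needed.

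Two smaller gaps. First, your ``cleanest route'' for the even negative direction --- restricting a hypothetical equitable $L$-coloring of $G$ to two components and claiming the result is equitable on those two components --- does not work: a color used at most $n(m+1)/2$ times on $G$ may be concentrated on the two chosen components and exceed their bound $m+1$. The paper instead builds the bad assignment directly: put the obstruction lists from Lemma~\ref{lem: Add} on \emph{one} component and constant lists $\{1,2\}$ on the remaining $n-1$, then compute that any proper $L$-coloring satisfies $\max\{|f^{-1}(1)|,|f^{-1}(2)|\} \ge (mn+n+2)/2 > n(m+1)/2 = \rho$. Second, for $n$ odd and $m \le 2$, no greedy balancing argument is needed: $G$ is a forest with $\Delta(G) \le 2$, so the Kostochka--Pelsmajer--West bound (forests are equitably $k$-choosable for $k \ge 1+\Delta/2$) already cited in Section~\ref{intro} gives the conclusion immediately.
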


With these results in mind, the following open question is natural to ask.

\begin{ques} \label{ques: finite}
Suppose that $n$ is a fixed integer such that $n \geq 2$.  Are there only finitely many equitably 2-choosable graphs (up to isomorphism) that are the disjoint union of $n$ stars? 
\end{ques}

For a fixed integer $N$, $N$-STARS EQUITABLE 2-CHOOSABLITY is the decision problem whose instances are $N$-tuples of natural numbers of the form $(m_1, \ldots, m_N)$, and asks the question: Is $\sum_{i=1}^N K_{1, m_i}$ equitably 2-choosable?  If the answer to Question~\ref{ques: finite} is yes for a $n \geq 2$, then $N$-STARS EQUITABLE 2-CHOOSABLITY is not NP-hard for $N =n$ unless P$=$NP. Note that Theorem~\ref{thm: 2star} shows that this is true for $N=2$.

Finally, in Section~\ref{2stars} we study the equitable $k$-choosability of the disjoint union of two stars for arbitrary $k$. We use an extremal choice of a partial list coloring that minimizes the difference of the cardinalities of the sets of uncolored vertices in the two stars along with a greedy partial list coloring process to show the following.

\begin{thm}\label{thm: Main}
Let $k \in \N$, $1 \leq m_1 \leq m_2$, and $\rho=\lceil (m_1+m_2+2)/k\rceil$. If $m_2 \leq \rho(k-1)-1$ and $m_1+m_2 \leq 15 + \rho(k-2) $, then $K_{1,m_1} + K_{1,m_2}$ is equitably $k$-choosable.
\end{thm}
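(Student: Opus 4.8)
The plan is to colour $G = K_{1,m_1} + K_{1,m_2}$ in three stages, given an arbitrary $k$-assignment $L$, always maintaining a \emph{partial $L$-colouring} that respects the equitability bound $\lceil |V(G)|/k\rceil = \lceil (m_1+m_2+2)/k\rceil = \rho$ on each colour. Write the two centres as $u_1, u_2$ and the leaf sets as $A_1, A_2$ with $|A_i| = m_i$. The first move is to colour the two centres: pick $f(u_1) \in L(u_1)$ and $f(u_2) \in L(u_2)$ arbitrarily (if $L(u_1) = L(u_2)$ and $\rho = 1$ a tiny separate argument is needed, but $\rho \ge 2$ in all the nontrivial ranges since $m_1+m_2 \ge 2$). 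Now each leaf in $A_i$ has at least $k-1$ permitted colours available (its list minus $f(u_i)$), and the only global constraint is the budget $\rho$ per colour.

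Second, among all partial $L$-colourings extending $\{u_1 \mapsto f(u_1), u_2 \mapsto f(u_2)\}$ that colour \emph{only} leaves and never exceed the budget $\rho$, I would choose one, call it $g$, that is \emph{extremal} in the sense of minimizing $\big||A_1 \setminus \mathrm{dom}(g)| - |A_2 \setminus \mathrm{dom}(g)|\big|$, and subject to that, colours as many leaves as possible. Let $r_i = |A_i \setminus \mathrm{dom}(g)|$ be the number of still-uncoloured leaves in star $i$. The key structural claim is that this extremal choice forces the uncoloured leaves to be "stuck" in a strong way: every colour $c$ appearing on an uncoloured leaf's list (restricted to the available colours after removing the incident centre colour) must already be used $\rho$ times by $g$, for otherwise we could colour one more leaf and either decrease the imbalance or keep it the same while colouring more — contradicting extremality. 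From this one deduces a lower bound on how many vertices $g$ has already coloured, and hence an upper bound $r_1 + r_2 \le $ (something controlled by $\rho$ and $k$), and moreover $|r_1 - r_2| \le 1$ unless one side is completely blocked.

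Third, I would run a greedy partial-colouring process on the remaining $r_1 + r_2$ leaves. Because each uncoloured leaf still has $k-1$ colours it could legally take (ignoring the budget) and we have shown every such colour is saturated, the counting has to be pushed: we know $g$ uses some set of colours to capacity, the uncoloured leaves of $A_1$ all avoid $f(u_1)$ and the uncoloured leaves of $A_2$ all avoid $f(u_2)$, and the two centre-colours together occupy at most $2$ slots. The arithmetic condition $m_2 \le \rho(k-1) - 1$ is exactly what guarantees the larger star $A_2$ can be finished — it says the $m_2$ leaves of the big star plus its centre fit into $k-1$ colour classes of size $\rho$ (the class of $f(u_2)$ being off-limits to those leaves), which is the bottleneck for a single star as in the $K_{1,m}$ characterization. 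The condition $m_1 + m_2 \le 15 + \rho(k-2)$ then handles the interaction of the two stars: the "15" is inherited from Theorem~\ref{thm: 2star} (the $k=2$ case, where $\rho = \lceil(m_1+m_2+2)/2\rceil$ and the bound reads $m_1+m_2 \le 15$), and the $\rho(k-2)$ term accounts for the extra room created by the $k-2$ colour classes not equal to either centre colour.

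The main obstacle I expect is the third stage: verifying that the greedy process never gets stuck, i.e., that at every step some uncoloured leaf has a colour whose current count is below $\rho$. This requires a careful potential/counting argument balancing the two stars simultaneously — one has to show that the imbalance $|r_1 - r_2|$ stays small throughout and that the total deficit $\sum_c (\rho - (\text{count of } c))$ over the usable colours never runs out before the leaves do. The delicate point is that colours on leaves of $A_1$ and leaves of $A_2$ overlap (same palette), so greedily filling one star can starve the other; the extremal choice of $g$ in stage two is precisely what is needed to start the greedy phase from a configuration where this cannot happen, and reconciling the two hypotheses $m_2 \le \rho(k-1)-1$ and $m_1+m_2 \le 15+\rho(k-2)$ with the worst-case run of the greedy process is where the real work lies, likely requiring a short case analysis on the residues of $m_1+m_2+2$ modulo $k$ (which control $\rho(k-1) - m_2$ and the slack in the second inequality) and possibly borrowing the explicit small-case colourings from the proof of Theorem~\ref{thm: 2star} when $\rho(k-2)$ contributes nothing, i.e.\ when $k = 2$.
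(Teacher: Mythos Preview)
Your proposal has the right instinct --- minimize the imbalance between uncoloured leaves in the two stars, then finish greedily --- but it diverges from the paper at the very first move, and that divergence is what creates the gap you yourself flag in stage three.

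The paper does \emph{not} colour the centres first. Instead it runs the $\epsilon$-greedy process (Process~\ref{proc: greedy}) on leaves only, peeling off $k-2$ colour classes each of size exactly $\rho$. What remains is still a disjoint union of two stars \emph{with centres intact}, and every vertex has a residual list of size at least $2$. Among all partial colourings of this shape (the set $\mathcal{C}$ in the proof), the paper chooses one minimizing the leaf-imbalance $|\mu_A - \mu_B|$. The payoff is that the residual problem is literally an instance of Theorem~\ref{thm: 2star}: in the balanced case $|\mu_A - \mu_B|\le 1$ one truncates lists to size $2$ and invokes Theorem~\ref{thm: 2star} directly (the hypothesis $m_1+m_2 \le 15 + \rho(k-2)$ becomes exactly $\mu_A+\mu_B\le 15$). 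In the unbalanced case where some $A$-leaf has been coloured, a one-line swap shows every uncoloured $B$-leaf still has a list of size $\ge 3$, and Lemma~\ref{lem: reduce} finishes. In the remaining unbalanced case ($U_A^g = A$), the hypothesis $m_2 \le \rho(k-1)-1$ yields $\mu_B \le \rho-1$; one deletes the $\mu_B-\mu_A$ excess $B$-leaves, applies Theorem~\ref{thm: 2star} to the balanced remainder, then extends greedily.

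By committing to centre colours up front you lose this reduction: your residual object is a bare set of leaves with forbidden colours, not a pair of stars, so Theorem~\ref{thm: 2star} is unavailable and you are forced to redo its content inside your greedy phase --- exactly the ``real work'' you could not close. There is also a concrete error in your structural claim: when $r_1=r_2$, colouring any single additional leaf \emph{increases} the imbalance, so extremality does not force every colour on every uncoloured leaf to be saturated; at best it constrains the side with strictly more uncoloured leaves. The repair is to postpone the centres, work over the structured family $\mathcal{C}$ of partial colourings using exactly $k-2$ colours each exactly $\rho$ times, and then run the three-case analysis above.
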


We also show that the converse of Theorem~\ref{thm: Main} does not hold. However, Theorem~\ref{thm: Main} is sharp in a sense. Lemma~\ref{lem: Only2} in Section~\ref{2choose} demonstrates that the first inequality in Theorem~\ref{thm: Main} is necessary for $K_{1,m_1} + K_{1,m_2}$ to be equitably $k$-choosable. However, this necessary condition alone is not sufficient for $K_{1,m_1} + K_{1,m_2}$ to be equitably $k$-choosable. Indeed Proposition~\ref{pro: tconstruct} in Section~\ref{2stars} implies that $K_{1, (k-1)(k^3-k+2)} + K_{1, k^3}$, which satisfies the first inequality but not the second inequality, is not equitably $k$-choosable whenever $k \geq 2$.  So, if one wishes to determine precisely when $K_{1,m_1} + K_{1,m_2}$ is equitably $k$-choosable for $k \geq 3$, the characterization needs to be stronger than $m_2 \leq \rho(k-1)-1$.  We suspect however that the second inequality in Theorem~\ref{thm: Main} can be relaxed quite a bit for $k \geq 3$.  This leads us to ask the following question which is a special case of Question~\ref{ques: main}.

\begin{ques} \label{ques: 2}
For which $k, m_1, m_2 \in \N$, is $K_{1,m_1} + K_{1,m_2}$ equitably $k$-choosable?
\end{ques}

\section{A Complexity Result} \label{NP}

To prove STARS EQUITABLE 2-COLORING is NP-complete, we will use the following well-known NP-complete problem~\cite{GJ79}: PARTITION which is defined as follows.
\\
\\
\noindent \emph{Instance:}  An $n$-tuple $(m_1, \ldots, m_n)$ such that $m_i \in \N$ for each $i \in [n]$.
\\
\\
\noindent \emph{Question:} Is there a partition $\{A,B\}$ of the set $[n]$ such that $\sum_{i \in A} m_i = \sum_{j \in B} m_j$?
\\

The following lemma captures the essence of why STARS EQUITABLE 2-COLORING is NP-complete.

\begin{lem} \label{lem: equiv}
Suppose that $n \in \N$ and $(m_1, \ldots, m_n)$ is an $n$-tuple such that $m_i \in \N$ for each $i \in [n]$ and $\sum_{i=1}^n m_i$ is even. There is a partition $\{A,B\}$ of the set $[n]$ such that $\sum_{i \in A} m_i = \sum_{j \in B} m_j$ if and only if $G=\sum_{i=1}^n K_{1,m_i+1}$ is equitably 2-colorable.
\end{lem}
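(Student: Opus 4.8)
The plan is to analyze the structure of an equitable 2-coloring of $G = \sum_{i=1}^n K_{1,m_i+1}$ directly. Write $M = \sum_{i=1}^n m_i$, which is even, and note that $|V(G)| = \sum_{i=1}^n (m_i+2) = M + 2n$, which is also even. So an equitable 2-coloring, if it exists, must split the vertex set into two color classes each of size exactly $(M+2n)/2 = M/2 + n$. In each star $K_{1,m_i+1}$, a proper 2-coloring must put the center in one color class and all $m_i+1$ leaves in the other; hence in star $i$ one color appears once and the other appears $m_i+1$ times. The only freedom is which of the two colors, say color $1$ or color $2$, plays the ``center'' role in star $i$.

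First I would set up the bookkeeping: let $A \subseteq [n]$ be the set of stars whose center gets color $1$ (equivalently, whose $m_i+1$ leaves get color $2$), and $B = [n] \setminus A$. Then the number of vertices colored $1$ is $|A| + \sum_{i \in B}(m_i + 1) = |A| + |B| + \sum_{i \in B} m_i = n + \sum_{i \in B} m_i$. The equitable condition forces this to equal $M/2 + n$, i.e. $\sum_{i \in B} m_i = M/2 = \sum_{i \in A} m_i$ (the last equality because the total is $M$). So an equitable 2-coloring of $G$ exists if and only if there is a partition $\{A, B\}$ of $[n]$ with $\sum_{i \in A} m_i = \sum_{i \in B} m_i$ — which is exactly the statement. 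I would present both directions of the ``if and only if'' explicitly: given such a partition, color star $i$ by putting its center in color $1$ if $i \in A$ and in color $2$ if $i \in B$, then verify the class sizes are each $M/2 + n$; conversely, given an equitable 2-coloring, extract $A$ as above and run the computation backward.

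One technical point I would be careful about: the definition of equitable $2$-coloring requires each color class to have size $\lceil |V(G)|/2 \rceil$ or $\lfloor |V(G)|/2 \rfloor$, and since $|V(G)|$ is even both must equal $|V(G)|/2$ exactly — this is where the hypothesis that $M$ is even is used, and it is what makes the argument a clean equivalence rather than an inequality. I should also note why we pad each star to $K_{1,m_i+1}$ rather than $K_{1,m_i}$: a single star $K_{1,t}$ is equitably 2-colorable iff $|t - 1| \le 1$, so padding ensures each individual component behaves the same way (center alone versus all leaves together) regardless of $m_i$, and the parity of $|V(G)|$ comes out even; without the $+1$, the vertex counts and the forced color-class sizes would not line up with the PARTITION condition. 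Since this is essentially a direct combinatorial identity, I do not anticipate a genuine obstacle — the only thing to watch is getting the off-by-one constants ($+n$ versus $+2n$, etc.) exactly right and making sure the edge case where some $m_i$ could a priori be such that a star is not 2-colorable does not arise (it never does here, since $K_{1,m_i+1}$ is always properly 2-colorable).
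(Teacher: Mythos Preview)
Your proposal is correct and follows essentially the same argument as the paper: both observe that a proper 2-coloring of each $K_{1,m_i+1}$ is determined by which color the center receives, set $A$ to be the indices where the center gets one fixed color, and compute that the resulting color class has size $n + \sum_{i \in B} m_i$, which equals $|V(G)|/2 = n + M/2$ precisely when the $m_i$ split evenly. The paper's proof is the same counting with $s = M/2$; your added remarks on why $|V(G)|$ is even and on the role of the $+1$ padding are helpful context but not additional mathematical content.
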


\begin{proof}
Throughout this proof, we assume that for each $i \in [n]$ the bipartition of the copy of $K_{1, m_i+1}$ used to form $G$ is $A_i$, $B_i$ where $A_i$ is the partite set of size 1, and we suppose $s \in \N$ satisfies $2s = \sum_{i=1}^n m_i$.  Suppose that there is a partition $\{A,B\}$ of the set $[n]$ such that $\sum_{i \in A} m_i = \sum_{j \in B} m_j = s$.  Now, consider the proper 2-coloring $f$ of $G$ defined as follows.  For each $l \in A$ color each vertex in $A_l$ with 1, and color each vertex in $B_l$ with 2.  Similarly, for each $j \in B$ color each vertex in $A_j$ with 2, and color each vertex in $B_j$ with 1.  Now, it is easy to see
$$|f^{-1}(1)| = |A| + \sum_{j \in B} (m_j+1) = |A|+|B|+s = n+s.$$
Similarly, $|f^{-1}(2)|=n+s$.  So, $f$ is an equitable 2-coloring of $G$.

Conversely, suppose that $g: V(G) \rightarrow [2]$ is an equitable 2-coloring of $G$.  Since $|V(G)|=2n+2s$, we know that $|g^{-1}(1)|=|g^{-1}(2)|=n+s$.  We also have that for each $i \in [n]$, $f(B_i)$ is either $\{1\}$ or $\{2\}$.  Now, let
$$A = \{i \in [n] : f(B_i) = \{1\} \}$$
and $B = [n] - A$.  Clearly, $\{A,B\}$ is a partition of the set $[n]$.  Notice
$$n+s = |g^{-1}(1)| = \sum_{i \in A} |B_i| + \sum_{j \in B} |A_j|=|B|+\sum_{i \in A} (m_i+1) = n+\sum_{i \in A} m_i.$$
This implies that $\sum_{i \in A} m_i = s$.  A similar argument shows $\sum_{j \in B} m_j = s$ as desired.
\end{proof}

\begin{thm} \label{thm: complete}
STARS EQUITABLE 2-COLORING is NP-complete.
\end{thm}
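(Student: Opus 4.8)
\emph{Proof proposal.} There are two things to establish: that STARS EQUITABLE 2-COLORING belongs to NP, and that it is NP-hard. For membership in NP, the key observation is that although the graph $G=\sum_{i=1}^n K_{1,m_i}$ may have a vertex set of size exponential in the bit-length of the instance $(m_1,\ldots,m_n)$ (when the $m_i$ are written in binary), its proper $2$-colorings are extremely constrained. In each component $K_{1,m_i}$ the center is adjacent to every leaf, so once the center's color is fixed the colors of all leaves are forced; hence every component admits exactly two proper $2$-colorings, and a proper $2$-coloring of $G$ is completely determined by the $n$-bit string recording the color of each center. Such a string is a certificate of size polynomial in the input, and from it one computes $|f^{-1}(1)|$ and $|f^{-1}(2)|$ as sums of the relevant $1$'s and $m_i$'s (arithmetic on polynomially sized integers) and checks that these two class sizes differ by at most $1$. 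This verifies membership in NP.

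For NP-hardness I would reduce from PARTITION, using Lemma~\ref{lem: equiv} as the heart of the argument. The only wrinkle is that Lemma~\ref{lem: equiv} requires $\sum_{i=1}^n m_i$ to be even, which a general PARTITION instance need not satisfy. I would handle this by first doubling: the map $(m_1,\ldots,m_n)\mapsto(2m_1,\ldots,2m_n)$ is a polynomial-time reduction from PARTITION to itself that always yields an even-sum instance and preserves yes/no answers. Composing with the transformation of Lemma~\ref{lem: equiv}, the overall reduction sends a PARTITION instance $(m_1,\ldots,m_n)$ to the STARS EQUITABLE 2-COLORING instance $(2m_1+1,\ldots,2m_n+1)$, which is clearly computable in polynomial time. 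Applying Lemma~\ref{lem: equiv} to $(2m_1,\ldots,2m_n)$, this output is a yes-instance of STARS EQUITABLE 2-COLORING if and only if $(2m_1,\ldots,2m_n)$ is a yes-instance of PARTITION, which in turn holds if and only if $(m_1,\ldots,m_n)$ is. Since PARTITION is NP-complete, STARS EQUITABLE 2-COLORING is NP-hard, and combined with membership in NP it is NP-complete.

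With Lemma~\ref{lem: equiv} already available, there is no deep obstacle remaining; the point that most needs care is the NP-membership claim, since the naive certificate listing a color for every vertex is too long when the input is encoded in binary, so it genuinely matters that a proper $2$-coloring of a disjoint union of stars has an exponentially more compact description. If one prefers not to compose reductions, an equally valid alternative to the doubling step is to map any PARTITION instance with $\sum m_i$ odd (which is automatically a no-instance) to a fixed no-instance of STARS EQUITABLE 2-COLORING, for example the single star $K_{1,3}$, which is not equitably $2$-colorable; either route completes the proof.
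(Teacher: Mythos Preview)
Your proposal is correct and follows essentially the same approach as the paper: NP membership via the compact $n$-bit certificate encoding the center colors, and NP-hardness by reduction from PARTITION through Lemma~\ref{lem: equiv}. The only cosmetic difference is in handling the parity obstruction---you double the $m_i$ to force an even sum, whereas the paper maps odd-sum instances directly to the fixed no-instance $(3)$---and you in fact mention the paper's route as an alternative at the end.
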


\begin{proof}
We first show that STARS EQUITABLE 2-COLORING is in NP.  Suppose $\textbf{x} = (m_1, \ldots, m_n)$ such that $m_i \in \N$ for each $i \in [n]$ is an input that STARS EQUTIABLE 2-COLORING accepts.  Notice a proper 2-coloring of $\sum_{i=1}^n K_{1, m_i}$ can be represented by a binary string of length $n$ where the $i^{th}$ bit indicates which of two possible proper 2-colorings is used to color $K_{1, m_i}$.  Let $\textbf{y}$ be a certificate that represents an equitable 2-coloring of $\sum_{i=1}^n K_{1, m_i}$.  Clearly, the certificate is of size $n$ and \textbf{x} is of size $O(\sum_{i=1}^n (\lfloor \log_{2}(m_i) \rfloor + 1))$.  Finally, it is easy to verify $\textbf{y}$ represents an equitable 2-coloring in polynomial time.

Now, we will show that STARS EQUITABLE 2-COLORING is NP-Hard by showing there is a polynomial reduction from PARTITION to STARS EQUITABLE 2-COLORING.  Suppose $\textbf{x} = (m_1, \ldots, m_n)$ is an arbitrary $n$-tuple such that $m_i \in \N$ for each $i \in [n]$.  We view $\textbf{x}$ as an input into PARTITION.  If $\sum_{i=1}^n m_i$ is odd input $\textbf{y}=(3)$ into STARS EQUITABLE 2-COLORING; otherwise, input $\textbf{y} = (m_1+1, \ldots, m_n+1)$ into STARS EQUITABLE 2-COLORING.  Then, accept if and only if STARS EQUITABLE 2-COLORING accepts. It is obvious that this reduction runs in polynomial time.

We must show that there is a partition $\{A,B\}$ of the set $[n]$ such that $\sum_{i \in A} m_i = \sum_{j \in B} m_j$ if and only if there is an equitable 2-coloring of: $G= K_{1,3}$ in the case $\sum_{i=1}^n m_i$ is odd and $G = \sum_{i=1}^n K_{1,m_i+1}$ in the case $\sum_{i=1}^n m_i$ is even.  This statement clearly holds when $\sum_{i=1}^n m_i$ is odd, and the statement follows from Lemma~\ref{lem: equiv} when $\sum_{i=1}^n m_i$ is even.
\end{proof}

\section{Equitable 2-Choosability of the Disjoint Union of Stars} \label{2choose}

From this point forward, for any graph $G$ and $k \in \N$, we let $\rho(G,k) = \lceil |V(G)|/k \rceil$.  Additionally, when $G$ and $k$ are clear from context, we use $\rho$ to denote $\rho(G,k)$.  We begin this section with a lemma that gives us a simple necessary condition for the disjoint union of two stars to be equitably $k$-choosable. In this section, our primary use of this result will be in the case of equitable 2-choosability.

\begin{lem}\label{lem: Only}
Let $k \in \N$ and $G = K_{1,m_1} + K_{1,m_2}$ where $1 \leq m_1 \leq m_2$. If $m_2 > \rho (G,k)(k-1)-1 -\max\{0,m_1-\rho (G,k)+1\}$ then $G$ is not equitably $k$-choosable.
\end{lem}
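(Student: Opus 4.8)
The plan is to produce a single $k$-assignment $L$ for $G$ that admits no equitable $L$-coloring. The cleanest choice is the constant $k$-assignment with palette $\mathcal{L}=[k]$, so that in any equitable $L$-coloring each of the $k$ colors is used at most $\rho := \rho(G,k) = \lceil (m_1+m_2+2)/k \rceil$ times. Let $c_1, c_2$ denote the centers of the copies of $K_{1,m_1}$ and $K_{1,m_2}$ respectively, and suppose for contradiction that $f$ is an equitable $L$-coloring of $G$.

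First I would isolate one color to serve as a bottleneck: set $a := f(c_2)$, the color on the center of the larger star. Since every leaf of $K_{1,m_2}$ is adjacent to $c_2$, none of these $m_2$ leaves can receive $a$, so inside $K_{1,m_2}$ the color $a$ is used exactly once. Let $t$ be the number of vertices of $K_{1,m_1}$ that receive color $a$. If $c_1$ is colored $a$, then no leaf of the small star is, whence $t=1$; otherwise $t$ counts small-star leaves, so $0 \le t \le m_1$. Either way $t \le m_1$ (using $m_1 \ge 1$), and color $a$ is used exactly $t+1$ times in $G$, so equitability forces $t+1 \le \rho$, i.e. $t \le \rho - 1$. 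Hence $t \le \min\{m_1, \rho-1\}$.

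Next I would count globally. Exactly $(m_1+m_2+2)-(t+1) = m_1+m_2+1-t$ vertices of $G$ are colored with one of the $k-1$ colors in $[k] \setminus \{a\}$, and each such color is used at most $\rho$ times, so $m_1+m_2+1-t \le (k-1)\rho$. Rearranging and substituting the bound on $t$ yields
$$m_2 \le \rho(k-1) - 1 - m_1 + \min\{m_1, \rho-1\} = \rho(k-1) - 1 - \max\{0, m_1-\rho+1\},$$
where the last equality is the elementary identity $-m_1 + \min\{m_1, c\} = -\max\{0, m_1-c\}$ with $c = \rho-1$. This contradicts the hypothesis $m_2 > \rho(k-1) - 1 - \max\{0, m_1-\rho+1\}$, so no equitable $L$-coloring of $G$ exists and $G$ is not equitably $k$-choosable.

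The argument is short, so I do not expect a real obstacle; the only delicate points are verifying that color $a$ is used exactly $t+1$ times (the big star must contribute precisely its center and nothing else) and the $\min$/$\max$ identity, which one checks by splitting into the cases $m_1 \le \rho-1$ and $m_1 \ge \rho$. The reasoning is valid for all $k \ge 1$: when $k=1$ the set $[k]\setminus\{a\}$ is empty and the inequality $m_1+m_2+1-t \le 0$ is already absurd.
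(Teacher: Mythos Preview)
Your proof is correct and uses the same core strategy as the paper: take the constant $k$-assignment $L(v)=[k]$ and derive a contradiction from a counting argument on color usage. The paper splits into the cases $f(w_0)=f(u_0)$ and $f(w_0)\neq f(u_0)$ (and the latter into two further subcases according to the sign of $m_1-\rho+1$), whereas you handle everything at once by tracking only the color $a=f(c_2)$ and bounding its multiplicity by $t+1$ with $t\le\min\{m_1,\rho-1\}$; the $\min$/$\max$ identity then absorbs what the paper treats as separate subcases, giving a slightly cleaner presentation of the same argument.
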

\begin{proof}
Note that the result clearly holds when $k=1$.  So, we may assume that $k \geq 2$. Also note that $\rho \geq 1$, and the result clearly holds when $\rho = 1$.  So, we may assume that $\rho \geq 2$ (i.e., $k < m_1+m_2+2$).  Consider the $k$-assignment $L$ for $G$ given by $L(v) = [k]$ for all $v \in V(G)$. Let the bipartition of the copy of $K_{1,m_1}$ used to form $G$ be $\{w_0\}$, $A$ and the bipartition of the copy of $K_{1,m_2}$ used to form $G$ be $\{u_0\}$, $B$.  To prove the desired result, we will show that $G$ is not equitably $L$-colorable.

Suppose for the sake of contradiction that $f$ is an equitable $L$-coloring of $G$. Suppose that $f(w_0) = c_{w_0}$ and $f(u_0)=c_{u_0}$. We will derive a contradiction in the following two cases: (1) $c_{w_0}=c_{u_0}$ and (2) $c_{w_0} \neq c_{u_0}$. For the first case, since $f$ is proper, the vertices of $A \cup B$ are colored with colors from $[k] - \{c_{w_0}\}$. Note that
$$ m_1+m_2 = \sum_{i \in [k] - \{c_{w_0}\}} |f^{-1}(i)| \leq \rho (k-1)$$ which implies that $m_2 \leq \rho (k-1) -m_1$.  Since $\rho \geq 2$, $m_1 \geq m_1 + (2 - \rho) = 1 + m_1 - \rho+1$.  So, $-m_1 \leq -1-\max\{0,m_1-\rho+1\}$. Therefore we know that $m_2 \leq \rho(k-1) - 1 -\max\{0,m_1-\rho+1\}$ which is a contradiction.

In the second case, we know that the vertices of $A$ are colored with colors from $[k] - \{c_{w_0}\}$, and the vertices of $B$ are colored with colors from $[k] - \{c_{u_0}\}$.  Since $f$ is an equitable $L$-coloring it is clear that $|f^{-1}(c_{u_0}) \cap A| \leq \rho -1$ and $|f^{-1}(c_{w_0}) \cap B| \leq \rho -1$. Suppose $\max\{0, m_1 - \rho + 1\} = m_1 - \rho + 1$. We have that
$$m_1+m_2 = |f^{-1}(c_{w_0}) \cap B| + |f^{-1}(c_{u_0}) \cap A| + \sum_{i \in [k] - \{c_{w_0}, c_{u_0}\}} |f^{-1}(i)| \leq 2 (\rho -1) + \rho(k-2).$$
So, it follows that $m_2 \leq \rho (k-1) -1 - (m_1 - \rho + 1) = \rho (k-1) -1 -\max\{0, m_1 - \rho +1\}$ which is a contradiction. Now, suppose $\max\{0, m_1 - \rho + 1\} = 0$. Notice that $|f^{-1}(c_{w_0}) \cap A| \leq |A| = m_1$, which implies that
$$m_1+m_2 = |f^{-1}(c_{w_0}) \cap B| + |f^{-1}(c_{u_0}) \cap A| + \sum_{i \in [k] - \{c_{w_0}, c_{u_0}\}} |f^{-1}(i)| \leq (\rho -1) + m_1 + \rho(k-2).$$ Thus, we have that $m_2 \leq \rho (k - 1) - 1 = \rho (k - 1) - 1 - \max\{0, m_1 - \rho + 1\}$ which is a contradiction.
\end{proof}

Lemma~\ref{lem: Only} gives us a necessary condition for the disjoint union of two stars to be equitably $k$-choosable: if $G = K_{1,m_1} + K_{1,m_2}$ is equitably $k$-choosable, then $m_2 \leq \rho (G,k)(k-1)-1 -\max\{0,m_1-\rho (G,k)+1\}$.  Interestingly, $m_2 \leq \rho (G,k)(k-1)-1$ implies that $m_2 \leq \rho (G,k)(k-1)-1 -\max\{0,m_1-\rho (G,k)+1\}$.  So, we immediately have an equivalent necessary condition that is a bit easier to state.

\begin{lem} \label{lem: Only2}
Suppose $G = K_{1,m_1} + K_{1,m_2}$ where $1 \leq m_1 \leq m_2$.  If $m_2 \leq \rho (G,k)(k-1)-1$, then $m_2 \leq \rho (G,k)(k-1)-1 -\max\{0,m_1-\rho (G,k)+1\}$.  Consequently, the following two statements hold and are equivalent.
\\
(i) If $G$ is equitably $k$-choosable, then $m_2 \leq \rho (G,k)(k-1)-1 -\max\{0,m_1-\rho (G,k)+1\}$.
\\
(ii)  If $G$ is equitably $k$-choosable, then $m_2 \leq \rho (G,k)(k-1)-1$.~\footnote{It should be noted that while these statements are equivalent, the inequality in Statement~(i) holds with equality more often than the inequality in Statement~(ii).}  
\end{lem}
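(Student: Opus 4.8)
The plan is to prove the core implication in Lemma~\ref{lem: Only2} directly by a short case analysis on the sign of $m_1 - \rho + 1$, where $\rho = \rho(G,k) = \lceil (m_1+m_2+2)/k \rceil$; the rest of the statement then follows formally from Lemma~\ref{lem: Only}. So the real content is: \emph{if} $m_2 \le \rho(k-1) - 1$, \emph{then} $\max\{0, m_1 - \rho + 1\} = 0$, i.e.\ $m_1 \le \rho - 1$. Indeed, once this is shown, the term $\max\{0, m_1-\rho+1\}$ vanishes and the two inequalities $m_2 \le \rho(k-1)-1$ and $m_2 \le \rho(k-1)-1-\max\{0,m_1-\rho+1\}$ coincide. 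Thus the whole lemma reduces to one clean claim about $m_1$ and $\rho$.

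First I would dispose of trivialities: if $k=1$ then $\rho = m_1+m_2+2$ and $m_2 \le \rho(k-1)-1 = -1$ is impossible, so there is nothing to prove; hence assume $k \ge 2$. Next, suppose toward a contradiction that $m_2 \le \rho(k-1)-1$ but $m_1 \ge \rho$. Since $1 \le m_1 \le m_2$, we would then have $m_1 + m_2 \ge 2\rho$, and also $m_1 + m_2 \le m_2 + m_2 \le 2(\rho(k-1)-1) = 2\rho(k-1) - 2$. The key inequality I want to exploit is the defining upper bound on $\rho$ coming from the ceiling: $\rho \le (m_1 + m_2 + 2 + (k-1))/k$, equivalently $k\rho \le m_1 + m_2 + k + 1$, i.e.\ $m_1 + m_2 \ge k\rho - k - 1 = (k-1)\rho + (\rho - k - 1)$... actually the cleaner route is the lower bound $\rho \ge (m_1+m_2+2)/k$, giving $m_1 + m_2 \le k\rho - 2$. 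Combining $m_1 + m_2 \ge 2\rho$ (from $m_1 \ge \rho$, $m_2 \ge m_1 \ge \rho$) with $m_1 + m_2 \le k\rho - 2$ yields $2\rho \le k\rho - 2$, i.e.\ $(k-2)\rho \ge 2$, which for $k=2$ is already a contradiction ($0 \ge 2$). For $k \ge 3$ this does not immediately contradict anything, so I need to push harder: use instead the bound $m_1 + m_2 \le m_2 + m_2 \le 2\rho(k-1) - 2$ together with $m_1 + m_2 \le k\rho - 2$, and reconcile with $m_1 \ge \rho$ by noting $m_2 = (m_1+m_2) - m_1 \le (k\rho - 2) - \rho = (k-1)\rho - 2 < \rho(k-1) - 1$, which is fine — so the contradiction has to come from comparing $m_1 \ge \rho$ against how small $m_1+m_2$ is forced to be. The honest statement is $m_1+m_2 \le k\rho-2$ and $m_1 \ge \rho$ force $m_2 \le (k-1)\rho - 2$; that's consistent. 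Let me reconsider: the right contradiction uses $\rho = \lceil (m_1+m_2+2)/k\rceil < (m_1+m_2+2)/k + 1$, so $k(\rho-1) < m_1+m_2+2$, i.e.\ $m_1+m_2 > k\rho - k - 2 \ge k\rho - k - 2$. With $m_2 \le \rho(k-1)-1$ this gives $m_1 > k\rho - k - 2 - \rho(k-1) + 1 = \rho - k - 1$, which is a \emph{lower} bound on $m_1$, not helpful for a contradiction.

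The actual correct argument, which I would carry out carefully, is the reverse direction: assume $m_1 \ge \rho$ (equivalently $m_1 - \rho + 1 \ge 1$) and derive $m_2 > \rho(k-1)-1$. From $m_1 \ge \rho$ we get $m_1 + m_2 \ge \rho + m_2$. On the other hand $\rho \ge (m_1+m_2+2)/k$ gives $m_1 + m_2 \le k\rho - 2$, so $\rho + m_2 \le k\rho - 2$, i.e.\ $m_2 \le (k-1)\rho - 2$ — still the wrong direction. I suspect the clean proof instead contraposes: assume the conclusion of (i) fails, $m_2 > \rho(k-1) - 1 - \max\{0, m_1-\rho+1\}$, and show $m_2 > \rho(k-1)-1$, which is immediate when the max is $0$ and requires showing the max cannot be positive — back to the same claim. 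So the heart is genuinely the one-line lemma "$m_2 \le \rho(k-1)-1 \implies m_1 \le \rho-1$," and I would prove it by: $m_1 + m_2 \le m_2 + m_2$? No — by $m_1 \le m_2 \le \rho(k-1)-1$ and the ceiling bound $k\rho \le m_1+m_2+2+(k-1)$, so $k\rho - k + 1 \le m_1 + m_2 \le m_1 + \rho(k-1) - 1$, giving $k\rho - k + 1 - \rho(k-1) + 1 \le m_1$, i.e.\ $\rho - k + 2 \le m_1$ — again a lower bound. I therefore expect the intended proof is actually an inequality-juggling argument I should reconstruct with the correct ceiling estimate $\rho \le \lceil\cdot\rceil$ in the form $(\rho-1)k < m_1+m_2+2$; the main obstacle will be getting the direction of the ceiling inequality right so that assuming $m_1 \ge \rho$ forces $m_2 \ge \rho(k-1)$, contradicting $m_2 \le \rho(k-1)-1$. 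Once that single inequality chain is pinned down, statement~(i) follows, statement~(ii) is the special case $\max = 0$, and their equivalence is the observation that (i) trivially implies (ii) while (ii) plus the just-proved implication gives back (i); I would state the footnote's remark about equality holding more often in~(i) as an immediate consequence of the fact that $\max\{0,m_1-\rho+1\}$ can be positive in ranges where (ii) is strict.
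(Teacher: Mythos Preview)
Your approach has a genuine gap: the claim you identify as ``the real content'' --- namely that $m_2 \le \rho(k-1)-1$ forces $m_1 \le \rho - 1$, so that $\max\{0,m_1-\rho+1\}=0$ --- is simply false. Take $k=3$, $m_1=5$, $m_2=8$. Then $\rho = \lceil 15/3\rceil = 5$, so $\rho(k-1)-1 = 9 \ge m_2$, yet $m_1 = 5 = \rho$, giving $\max\{0,m_1-\rho+1\}=1 \neq 0$. (The lemma still holds here: $m_2 = 8 = 9 - 1$.) This is why every inequality chain you attempt keeps producing bounds in the wrong direction: you are trying to squeeze out a contradiction that does not exist.

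The paper's proof does not try to show the max term vanishes. Instead it assumes $m_2 \le \rho(k-1)-1$ and, for contradiction, $m_2 > \rho(k-1)-1 - \max\{0,m_1-\rho+1\}$. If the max is $0$ these two inequalities are already incompatible. If the max is $m_1-\rho+1>0$, the second inequality reads $m_2 > \rho(k-1)-1-(m_1-\rho+1) = \rho k - m_1 - 2$, i.e.\ $m_1+m_2+2 > \rho k$, which contradicts $\rho = \lceil (m_1+m_2+2)/k\rceil \ge (m_1+m_2+2)/k$. That one line is the whole argument; note that in this second case the hypothesis $m_2 \le \rho(k-1)-1$ is not even used --- the contradiction comes straight from the definition of $\rho$. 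Once the implication is established, (i) is Lemma~\ref{lem: Only}, (ii) is the special case, and the equivalence is immediate.
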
  

\begin{proof}
Suppose for the sake of contradiction that $m_2 > \rho(k-1)-1 -\max\{0,m_1-\rho+1\}$.  We clearly get a contradiction when $0 \geq m_1 - \rho + 1$.  So, we suppose that $0 < m_1 - \rho + 1$.  Then, we have that $m_2 > \rho(k-1)-1 - m_1 + \rho - 1$ which implies that $m_1+m_2+2 > \rho k$ which is clearly a contradiction since $\rho = \lceil (m_1+m_2+2)/k \rceil$. 
\end{proof}

When we apply Lemma~\ref{lem: Only2} in this paper, we will always be using the Statement~(ii).  In the case of equitable 2-choosability, we may immediately deduce the following.

\begin{cor}\label{cor: Sub}
Let $G= K_{1,m_1}+K_{1,m_2}$ where $1 \leq m_1 \leq m_2$. If $m_2-m_1 \geq 2$, then $G$ is not equitably 2-choosable.
\end{cor}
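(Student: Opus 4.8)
The plan is to deduce this immediately from Lemma~\ref{lem: Only2}, specialized to $k=2$. Suppose, toward a contradiction, that $G = K_{1,m_1}+K_{1,m_2}$ is equitably $2$-choosable while $m_2 - m_1 \geq 2$. By Statement~(ii) of Lemma~\ref{lem: Only2} with $k=2$, equitable $2$-choosability forces
$$ m_2 \leq \rho(G,2)(2-1) - 1 = \rho(G,2) - 1, $$
where $\rho(G,2) = \lceil |V(G)|/2 \rceil = \lceil (m_1+m_2+2)/2 \rceil$ since $|V(G)| = m_1+m_2+2$.

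The remaining step is a one-line parity computation to pin down $\rho(G,2)$. If $m_1+m_2$ is even, then $\rho(G,2) = (m_1+m_2)/2 + 1$, so the inequality $m_2 \leq \rho(G,2)-1$ becomes $m_2 \leq (m_1+m_2)/2$, i.e.\ $m_2 \leq m_1$. If $m_1+m_2$ is odd, then $\rho(G,2) = (m_1+m_2+1)/2 + 1$, so $m_2 \leq \rho(G,2)-1$ becomes $m_2 \leq (m_1+m_2+1)/2$, i.e.\ $m_2 \leq m_1+1$. In either case $m_2 - m_1 \leq 1$, contradicting the assumption $m_2 - m_1 \geq 2$.

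I do not anticipate any genuine obstacle here: the only point requiring a little care is evaluating the ceiling according to the parity of $m_1+m_2$. (Alternatively one could invoke Lemma~\ref{lem: Only} directly with $k=2$, but routing through Lemma~\ref{lem: Only2} sidesteps the $\max\{0, m_1-\rho+1\}$ term and keeps the argument shorter.)
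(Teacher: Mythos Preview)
Your proof is correct and follows the same approach as the paper: both invoke Lemma~\ref{lem: Only2}(ii) with $k=2$ to force $m_2 \leq \rho(G,2)-1$ and then show this contradicts $m_2 - m_1 \geq 2$. The paper avoids your parity split by directly bounding $\rho(G,2) \leq \lceil (m_2 + (m_2-2) + 2)/2 \rceil = m_2$ (using $m_1 \leq m_2 - 2$), but this is a cosmetic difference, not a substantive one.
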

\begin{proof}
Note that $\rho (G,2) \leq \lceil (m_2+m_2-2+2)/2\rceil = m_2$. So, $m_2 \geq \rho > \rho -1.$ Lemma~\ref{lem: Only2} implies $G$ is not equitably 2-choosable.
\end{proof}

We now present three lemmas that we will use to prove Theorem~\ref{thm: 2star}.

\begin{lem}\label{lem: Add}
Let $G= K_{1,m_1}+K_{1,m_2}$ where $1 \leq m_1 \leq m_2$. If $m_1+m_2 \geq 16$, then $G$ is not equitably 2-choosable.
\end{lem}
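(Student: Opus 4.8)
The goal is to show that if $m_1+m_2 \ge 16$ (with $1 \le m_1 \le m_2$), then $G = K_{1,m_1}+K_{1,m_2}$ is not equitably $2$-choosable. By Corollary~\ref{cor: Sub} we may assume $m_2 - m_1 \le 1$, since otherwise $G$ is already not equitably $2$-choosable. So the real work is to handle the ``balanced'' cases $m_2 = m_1$ and $m_2 = m_1 + 1$ with $m_1 + m_2 \ge 16$; note that Lemma~\ref{lem: Only2} with a constant list is useless here (the constant $2$-assignment does admit an equitable coloring when $m_2 - m_1 \le 1$), so I must exhibit a genuinely non-constant bad $2$-assignment. The plan is to construct an explicit $2$-assignment $L$ on $G$ — using the freedom to give different two-element lists to the centers $w_0, u_0$ and to the leaves in $A$ and $B$ — and show no equitable $L$-coloring exists. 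Throughout, write $\rho = \rho(G,2) = \lceil (m_1+m_2)/2 \rceil$, so each color class in an equitable coloring has size at most $\rho$.

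First I would set up the combinatorial skeleton. In any proper $L$-coloring of a star $K_{1,m}$ with center $c$, once $f(c)$ is fixed, every leaf whose list contains $f(c)$ is forced off that color, while every leaf whose list avoids $f(c)$ is free. This lets me engineer ``color pressure'': by choosing the leaf lists of the larger star $K_{1,m_2}$ to use colors that the smaller star's leaves are forced onto, I can make one color over-subscribed. Concretely, I will introduce a third color and arrange lists so that the palette $\LL$ has size $3$ (say $\LL = \{1,2,3\}$): give the two centers lists that interact, give most leaves of the big star a common list of two colors, and give the small star's leaves lists chosen so that whichever color the small center takes, a large block of leaves is pushed onto the ``expensive'' color. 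A careful case analysis on $(f(w_0), f(u_0))$ — at most the few ordered pairs allowed by the center lists — should in each case force some color to appear on at least $\rho + 1$ vertices, contradicting equitability. The inequality $m_1 + m_2 \ge 16$ is exactly the threshold that makes the counting go through (mirroring the constant $16 = 15+1$ from Theorem~\ref{thm: 2star}), and I expect the proof to split according to the parity of $m_1 + m_2$ and whether $m_1 = m_2$ or $m_1 = m_2 - 1$, since $\rho$ and the sizes of the forced blocks shift by one in the odd case.

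The main obstacle will be designing a single family of list assignments that works uniformly for all $m_1, m_2$ with $m_1 + m_2 \ge 16$ and $m_2 - m_1 \le 1$, rather than an ad hoc construction per case. The delicate point is balancing two competing demands: the big star must supply enough leaves on the ``bad'' color to force an overflow, but the small star must not accidentally relieve the pressure by having enough freedom to absorb leaves elsewhere — and since $m_1$ can be as small as $7$ (when $m_1 = 7, m_2 = 9$) or as large as $m_2$, the construction has to be robust at both ends of this range. I would first nail down the extremal small case $m_1 + m_2 = 16$ by hand to find the right list pattern, then verify that enlarging $m_1, m_2$ only adds leaves on the forced color, so the overflow persists; phrasing this monotonicity cleanly is what turns the example into a proof. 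If a single pattern proves awkward, the fallback is to treat $m_1 = m_2$ and $m_1 = m_2 - 1$ separately, which at worst doubles the bookkeeping but keeps each case transparent.
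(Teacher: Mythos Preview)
Your reduction via Corollary~\ref{cor: Sub} is correct and matches the paper, but note that once $m_2-m_1\le 1$ and $m_1+m_2\ge 16$ you have $m_1\ge 8$; your aside about $m_1=7,\ m_2=9$ violates $m_2-m_1\le 1$ and is already handled by the corollary. The broad shape of your plan---make the leaves of $G_2$ monochromatic via a constant list, then rig leaf-lists on $G_1$ so that two leaves are forced onto the overfull color regardless of how the centers are colored---is exactly the paper's strategy.

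The genuine gap is the commitment to a palette of size three. The paper uses four colors, and the fourth is not dispensable. It assigns $L(v)=\{1,2\}$ to every vertex of $G_2$ and $L(w_0)=\{3,4\}$, \emph{disjoint} from $\{1,2\}$, while the eight leaves $w_1,\dots,w_8$ receive the four mixed lists $\{1,3\},\{1,4\},\{2,3\},\{2,4\}$, two apiece (any further $w_i$ get $\{3,4\}$). Then $f(u_0)\in\{1,2\}$ fixes a color $c$ on all $m_2$ leaves of $B$, and whichever value $f(w_0)\in\{3,4\}$ takes, exactly one of the four mixed lists equals $\{c,f(w_0)\}$, forcing its two leaves onto $c$; this gives $|f^{-1}(c)|\ge m_2+2>\rho=m_2+1$. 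The disjointness of $L(w_0)$ from $\{1,2\}$ is the crux: if $L(w_0)$ meets $\{1,2\}$ and the colorer picks $f(w_0)\in\{1,2\}$, the leaves of $A$ are forced \emph{off} $f(w_0)$, not onto the overfull color, and that branch of the case analysis does not close. With only three colors there is no two-element list disjoint from $\{1,2\}$, so this escape route is always available and a three-color construction cannot work. The threshold $m_1\ge 8$ is precisely the $2\times 2\times 2$ count---two leaves for each of the four mixed pairs---which is where the constant $16$ comes from.
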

\begin{proof}
Note that by Corollary~\ref{cor: Sub} we may assume that $m_2 -m_1 \leq 1$. Let $G = G_1 + G_2$ where $G_1$ is a copy of $K_{1,m_1}$ and $G_2$ is a copy of $K_{1,m_2}$. It must be the case that $8 \leq m_1 \leq m_2 \leq m_1+1$. Now, suppose the bipartition of $G_1$ is $A'=\{w_0 \}$, $A=\{w_1, \ldots, w_{m_1} \}$ and the bipartition of $G_2$ is $B'=\{u_0 \}$, $B=\{u_1, \ldots, u_{m_2} \}$.

We will now construct a 2-assignment $L$ for $G$ for which there is no equitable $L$-coloring. Let $L(v) = [2]$ for $v \in B' \cup B$, $L(w_0) = \{3,4 \}$, $L(w_1)=L(w_2) = \{1,3\}$, $L(w_3)=L(w_4) = \{1,4\}$, $L(w_5)=L(w_6) = \{2,3\}$, $L(w_7)=L(w_8) = \{2,4\}$, and $L(w_i) = \{3,4\}$ for all $i \in [m_2] - [8]$. For the sake of contradiction, suppose that $G$ is equitably $L$-colorable. Let $f$ be an equitable $L$-coloring of $G$. Note $\rho (G,2) = \lceil (m_1+m_2+2)/2\rceil = m_2 + 1$. Clearly, $f(u_0)$ is either 1 or 2. Suppose $f(u_0) =2$.  Then it is clear that $f(u_i) = 1$ for all $i \in [m_2]$. Now, it is either the case that $f(w_0) =3$ or $f(w_0) = 4$. In the case that $f(w_0) = 3$ it must be that $f(w_1) = f(w_2) = 1$. However, this would imply that $|f^{-1}(1)| \geq m_2+2$ which is a contradiction. In the case that $f(w_0) = 4$ it must be that $f(w_3)=f(w_4) = 1$. However, this would also imply that $|f^{-1}(1)| \geq m_2+2$ which is a contradiction.

Suppose $f(u_0) = 1$.  Then it is clear that $f(u_i) = 2$ for all $i \in [m_2]$. Then we know that it is either the case that $f(w_0)= 3$ or $f(w_0) = 4$. In the case that $f(w_0) = 3$ it must be that $f(w_5)=f(w_6) = 2$ which would imply that $|f^{-1}(2)| \geq m_2+2$ a contradiction. Then in the case that $f(w_0) = 4$ it must be that $f(w_7) = f(w_8) = 2$ which implies that $|f^{-1}(2)| \geq m_2+2$ which is a contradiction.
\end{proof}

\begin{lem}\label{lem: Disjoint}
Let $G= G_1 + G_2$ where both $G_1$ and $G_2$ are copies of $K_{1,m}$ such that $m \in [7]$. Suppose the bipartition of $G_1$ is $\{w_0 \}$, $A= \{w_1, \ldots, w_{m} \}$, and the bipartition of $G_2$ is $\{u_0 \}$, $B=\{u_1, \ldots, u_{m} \}$.  If $L$ is a 2-assignment for $G$ such that $L(w_0) \cap L(u_0) = \emptyset$, then $G$ is equitably $L$-colorable.
\end{lem}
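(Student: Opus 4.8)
We have $G = G_1 + G_2$, both copies of $K_{1,m}$ with $m \le 7$, and a 2-assignment $L$ with $L(w_0) \cap L(u_0) = \emptyset$. We need an equitable $L$-coloring. Note $|V(G)| = 2m+2$, so $\rho(G,2) = m+1$: every color may be used at most $m+1$ times.

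**The key observation.** Since $L(w_0)$ and $L(u_0)$ are disjoint 2-element sets, they involve at least 2 and at most 4 colors, and $w_0$'s color will never clash with $u_0$'s color. So I'd fix $f(w_0) = a \in L(w_0)$ and $f(u_0) = b \in L(u_0)$ arbitrarily (with $a \ne b$ automatically). Then every leaf $w_i$ must get a color in $L(w_i) \setminus \{a\}$ and every leaf $u_j$ a color in $L(u_j) \setminus \{b\}$. Since $|L(w_i)| = 2$, each $w_i$ has at least one available color; likewise each $u_j$. So there is certainly a proper $L$-coloring; the only question is equitability.

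**The plan: a counting/averaging argument on the leaves.** For each leaf $w_i$, let $c(w_i)$ be the unique element of $L(w_i) \setminus \{a\}$ if $a \in L(w_i)$, and otherwise $w_i$ has two free choices in $L(w_i)$. The "forced" leaves contribute a fixed count to each color class; the "free" leaves (on each side) can be distributed between their two colors. I would argue that the total multiset of colors used, summed over both stars plus $\{a, b\}$, has $2m+2$ entries, so the average load per color is $(2m+2)/(\text{number of colors used}) \le (2m+2)/2 = m+1$. The difficulty is that we can't freely permute colors — each leaf is constrained to its own list — so a pure averaging bound doesn't immediately give a coloring with every class $\le m+1$. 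The real work is a Hall-type / discharging argument: show that whenever some color $c$ would be forced above $m+1$, one of the free leaves currently assigned $c$ can be switched to its alternative color without pushing that alternative above $m+1$. Here is where $m \le 7$ enters: the total slack is small enough that a short case analysis (or a clean inequality) closes it, whereas for $m \ge 8$ Lemma~\ref{lem: Add} shows it genuinely fails.

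**Executing it.** Concretely I would: (1) choose $a \in L(w_0)$, $b \in L(u_0)$; (2) for $G_1$, greedily color leaves, putting each forced leaf at its only legal color and each free leaf provisionally at whichever of its two colors is currently least loaded; do the same for $G_2$; (3) check the resulting color loads. Because $G_1$ contributes $m+1$ vertices total and $G_2$ contributes $m+1$ total, and $a \ne b$, the only way a color $c$ exceeds $m+1$ is if $c$ is heavily used by leaves of *both* stars and is not $a$ or $b$; in that case $c \ne a$ means some $w_i$ with $c \in L(w_i)$ could instead take the other color of $L(w_i)$, and similarly on the $u$ side — rebalance until all loads are $\le m+1$. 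I expect the main obstacle to be verifying that this rebalancing always terminates successfully for every possible shape of $L$ restricted to the leaves; since each leaf list is one of $\binom{|\mathcal{L}|}{2}$ types and $m \le 7$, this reduces to a bounded check. I would organize it by the value of $|\mathcal{L}|$ (the size of the palette), which is at most $4$, and within each case bound how many leaves can be forced onto any single color.

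\medskip

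\noindent\emph{Remark on the boundary.} It is worth checking at $m = 7$ that the argument is tight: $2m+2 = 16$, $\rho = 8$, and the construction in Lemma~\ref{lem: Add} (which needs $m_1 + m_2 \ge 16$, i.e. $m \ge 8$ here) just barely does not apply, which is consistent with $m \le 7$ being exactly the right threshold.
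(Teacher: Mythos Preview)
Your proposal has two concrete gaps.

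First, the claim that the palette $\mathcal{L}$ has size at most $4$ is backwards: since $L(w_0)$ and $L(u_0)$ are disjoint $2$-sets, $|L(w_0)\cup L(u_0)|=4$, so $|\mathcal{L}|\ge 4$, and the leaves' lists may introduce arbitrarily many further colors. The paper in fact splits into the cases $|\mathcal{L}|=4$ and $|\mathcal{L}|\ge 5$ and treats them quite differently; the second case is handled by an extremal choice of $L$ minimizing $|\mathcal{L}|$ together with a direct construction when every ``extra'' color is heavy.

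Second, and more fundamentally, fixing $a\in L(w_0)$ and $b\in L(u_0)$ once and for all does not work: a leaf $w_i$ with $L(w_i)=\{a,c\}$ is \emph{forced} to $c$ and cannot be ``rebalanced'' as you describe. For instance at $m=7$, take $L(w_0)=\{1,2\}$, $L(u_0)=\{3,4\}$, every $L(w_i)=\{1,5\}$, and every $L(u_j)=\{3,5\}$. With $a=1$, $b=3$ every leaf is forced to colour $5$, which then appears $14>8$ times, and no switch is available. (Other choices of $(a,b)$ succeed here, but the point is that one fixed choice may be dead.) The paper's proof, in the $|\mathcal{L}|=4$ case, instead runs through all four possible pairs $(f(w_0),f(u_0))$ and shows that if \emph{each} of them fails then four pairwise-disjoint families of leaves in $A$ must each have size at least $2$, forcing $|A|\ge 8$ and contradicting $m\le 7$. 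This is precisely where the bound $m\le 7$ enters, and it is exactly what rules out the bad $8$-leaf construction of Lemma~\ref{lem: Add}.
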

\begin{proof}
For the sake of contradiction, suppose there is a 2-assignment $K$ for $G$ such that $K(w_0) \cap K(u_0) = \emptyset$ and $G$ is not equitably $K$-colorable.  Among all such 2-assignments, choose a 2-assignment, $L$, with the smallest possible palette size.  Let $L(w_0) = \{k,c\}$ and $L(u_0) = \{t,d\}$. Clearly, $|\mathcal{L}| \geq 4$.  We will first show that $|\LL| > 4$.  

Assume $|\mathcal{L}|=4$; that is, $\mathcal {L} = \{t,k,c,d\}$. For each $\{c_1,c_2\}$ such that $|\{c_1,c_2\}| =2$ and $\{c_1,c_2\} \subseteq \{t,k,c,d\}$ let $a_{\{c_1,c_2\}} = |L^{-1}(\{c_1,c_2\}) \cap A|$ and $b_{\{c_1,c_2\}} = | L^{-1}(\{c_1,c_2\}) \cap B|$. We now consider all possible colorings of $w_0$ and $u_0$. For all $v \in A$ let $L^{(1)}(v) = L(v) - \{k\}$, and for all $v \in B$ let $L^{(1)}(v) = L(v)-\{t\}$. Since $G$ is not equitably $L$-colorable, it must be that among the lists $L^{(1)}(u_1),\ldots,L^{(1)}(u_m),L^{(1)}(w_1),\ldots, L^{(1)}(w_m)$ there are $m+2$ lists that are $\{d\}$ or there are $m+2$ that are $\{c\}$.  Suppose that $m+2$ of them are $\{d\}$ (the case where $m+2$ of them are $\{c\}$ is similar). Then, $a_{\{k,d\}} + b_{\{t,d\}} \geq m+2$ which implies $a_{\{k,d\}} \geq 2$.

Let $L^{(2)}(v) = L(v)-\{c\}$ for all $v \in A$, and let $L^{(2)}(v) = L(v) - \{t\}$ for all $v \in B$. It must be that among the lists $L^{(2)}(u_1),\ldots,L^{(2)}(u_m),L^{(2)}(w_1),\ldots, L^{(2)}(w_m)$ there are $m+2$ that are $\{d\}$ or there are $m+2$ that are $\{k\}$. Notice that if $m+2$ of these lists are $\{k\}$, $a_{\{c,k\}} + b_{\{t,k\}} \geq m+2$ which implies $a_{\{c,k\}} + b_{\{t,k\}} + a_{\{k,d\}} + b_{\{t,d\}} \geq 2m+4 > |V(G)|$ which is a contradiction.  So it must be that $m+2$ of these lists are $\{d\},$ and we have that $a_{\{c,d\}} + b_{\{t,d\}} \geq m+2$ which means $a_{\{c,d\}} \geq 2$.

Let $L^{(3)}(v) = L(v)-\{k\}$ for all $v \in A$, and let $L^{(3)}(v) = L(v) - \{d\}$ for all $v \in B$. It must be that among the lists $L^{(3)}(u_1),\ldots,L^{(3)}(u_m),L^{(3)}(w_1),\ldots,L^{(3)}(w_m)$ there are $m+2$ that are $\{c\}$ or there are $m+2$ that are $\{t\}$. Notice that if $m+2$ of these lists are $\{c\}$, $a_{\{c,k\}} + b_{\{c,d\}} \geq m+2$ which implies $a_{\{c,k\}} + b_{\{c,d\}} + a_{\{k,d\}} + b_{\{t,d\}} \geq 2m+4 > |V(G)|$ which is a contradiction. So it must be that $m+2$ of these lists are $\{t\}$, and we have that $a_{\{t,k\}} + b_{\{t,d\}} \geq m+2$ which means $a_{\{t,k\}} \geq 2$.

Let $L^{(4)}(v) = L(v)-\{c\}$ for all $v \in A$, and let $L^{(4)}(v) = L(v) - \{d\}$ for all $v \in B$. It must be that among the lists $L^{(4)}(u_1),\ldots,L^{(4)}(u_m),L^{(4)}(w_1),\ldots,L^{(4)}(w_m)$ there are $m+2$ that are $\{t\}$ or there are $m+2$ that are $\{k\}$. Notice that if $m+2$ of these lists are $\{k\}$, $a_{\{c,k\}} + b_{\{d,k\}} \geq m+2$ which implies $a_{\{c,k\}} + b_{\{k,d\}} + a_{\{k,d\}} + b_{\{t,d\}} \geq 2m+4 > |V(G)|$ which is a contradiction. So it must be that $m+2$ of these lists are $\{t\}$, and we have that $a_{\{t,c\}} + b_{\{t,d\}} \geq m+2$ which means $a_{\{c,t\}} \geq 2$.

So, $a_{\{c,t\}} +a_{\{t,k\}} + a_{\{c,d\}} + a_{\{k,d\}} \geq 8$. However this implies that $8 \leq |A| =m$ which is a contradiction.  

Now, we have that $|\mathcal{L}| \geq 5$.  For every $q \in \mathcal{L}-\{t,k,c,d\}$, let $\eta(q) = |\{v: q \in L(v)\}|$. In the case there is an $ r\in \LL -\{k,t,c,d\}$ satisfying $\eta (r) \leq m+1$, let $L'$ be a new 2-assignment for $G$ given by
$$L'(v) = \begin{cases}
      \{x,k\} &\text{if } L(v) = \{x,r\} \text{ for some } x \neq k \\
      \{k,c\} &\text{if } L(v) = \{k,r\} \\
      L(v) &\text{if } r \notin L(v)
   \end{cases}.$$
By the extremal choice of $L$, we know that $G$ is equitably $L'$-colorable, and we call such a coloring $f$. We then recolor all $v$ such that $f(v) \notin L(v)$ with $r$. Note that since $r \notin \{t,k,c,d\}$, this coloring is proper, and it is easy to see that it is also an equitable $L$-coloring of $G$.

Now, suppose that for every $q \in \mathcal{L}-\{t,k,c,d\}$ that $\eta (q) > m+1$.  Since $\mathcal{L}-\{t,k,c,d\}$ is nonempty, we may suppose that $s \in \mathcal{L}-\{t,k,c,d\}$.  Note that $|L^{-1} ( \{s,t\}) \cap B| + | L^{-1}(\{s,d\}) \cap B| \leq m$ and $|L^{-1}(\{s,k\}) \cap A| + |L^{-1}(\{s,c\}) \cap A| \leq m$.  Without loss of generality assume $|L^{-1}(\{s,t\}) \cap B| \leq m/2$ and $|L^{-1}(\{s,k\}) \cap A| \leq m/2$. Color all $v \in L^{-1}(\{s,t\}) \cap B$ and $v \in L^{-1}(\{s,k\}) \cap A$ with $s$. In doing this $s$ is used at most $m$ times. Then, arbitrarily color uncolored vertices that have $s$ in their lists with $s$ until exactly $m+1$ vertices are colored with $s$. Then, color $w_0$ with $k$ and $u_0$ with $t$. Now, let $U$ be the set of all uncolored vertices in $A \cup B$.  Let $L'(v) = L(v) - \{s,k\}$ for all $v \in U \cap A$, and let $L'(v) = L(v) - \{s,t\}$ for all $v \in U \cap B$. Clearly, $|U| = m-1$ and $|L'(v)| \geq 1$ for all $v \in U$. So, we can color each $v \in U$ with a color in $L'(v)$ to complete an equitable $L$-coloring of $G$.  This contradiction completes the proof.
\end{proof}

\begin{lem}\label{lem: Good}
Let $G= K_{1,m}+K_{1,m}$ where $m \in [7]$. Then $G$ is equitably 2-choosable.
\end{lem}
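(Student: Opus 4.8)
The plan is to fix an arbitrary $2$-assignment $L$ for $G=K_{1,m}+K_{1,m}$, write the two stars as $G_1$ (center $w_0$, leaves $w_1,\dots,w_m$) and $G_2$ (center $u_0$, leaves $u_1,\dots,u_m$), and build an equitable $L$-coloring by hand. Since $\rho(G,2)=\lceil(2m+2)/2\rceil=m+1$, ``equitable'' means here that no color is used more than $m+1$ times. The whole argument splits on whether $L(w_0)\cap L(u_0)=\emptyset$. If it is, the result is immediate from Lemma~\ref{lem: Disjoint}; this is the only step that uses the hypothesis $m\le 7$. So the real work is the case $L(w_0)\cap L(u_0)\neq\emptyset$, and there I would fix a color $\alpha\in L(w_0)\cap L(u_0)$.

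My first move in that case would be to color both centers $\alpha$. Each leaf is adjacent only to the center of its star, so after deleting $\alpha$ from every leaf's list the leaves form an independent set and the task reduces to choosing $f(v)\in L(v)\setminus\{\alpha\}$ for every leaf $v$ with no color used more than $m+1$ times (and $\alpha$ used exactly twice). A short transportation/Hall argument shows this is always possible unless there is a color $\beta\neq\alpha$ such that at least $m+2$ leaves have list exactly $\{\alpha,\beta\}$: the only way to obstruct the choice is to have too many leaves forced onto a common color, and a leaf is forced precisely when its reduced list is a singleton, i.e.\ when its original list is $\{\alpha,\beta\}$. If no such $\beta$ exists we are done, so assume one does. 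Because each star has only $m$ leaves while at least $m+2$ leaves carry the list $\{\alpha,\beta\}$, each star contains at least two such leaves; writing $p_i\ge 2$ for the number of them in $G_i$ we get $p_1+p_2\ge m+2$, hence there are at most $2m-(p_1+p_2)\le m-2$ ``ordinary'' leaves (those whose list is not $\{\alpha,\beta\}$).

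I would then recolor the centers so that the $\{\alpha,\beta\}$-leaves are not all pushed onto one color, in two sub-cases. If $\beta$ lies in a center list, say $L(w_0)=\{\alpha,\beta\}$, I would color $w_0$ with $\beta$ and $u_0$ with $\alpha$: the $p_1$ special leaves of $G_1$ become forced to $\alpha$ and the $p_2$ special leaves of $G_2$ become forced to $\beta$, every ordinary leaf still admits a color outside $\{\alpha,\beta\}$ (its list has size $2$ and is not $\{\alpha,\beta\}$), so $\alpha$ is used $p_1+1\le m+1$ times, $\beta$ is used $p_2+1\le m+1$ times, and every other color occurs only in lists of ordinary leaves and hence is used at most $m-2$ times. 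If instead $\beta$ lies in neither center list, write $L(w_0)=\{\alpha,c_1\}$ and $L(u_0)=\{\alpha,c_2\}$ with $c_1,c_2\notin\{\alpha,\beta\}$, and color $w_0$ with $c_1$, $u_0$ with $c_2$: now all $p_1+p_2$ special leaves are free between $\alpha$ and $\beta$, and since the special leaves together with the at most $m-2$ ordinary leaves that could be forced onto $\alpha$ or $\beta$ number at most $2m\le 2(m+1)$, one can split the special leaves so that $\alpha$ and $\beta$ are each used at most $m+1$ times; meanwhile $c_1,c_2$ and all other colors appear only in center lists or ordinary-leaf lists, so stay at most $m-1<m+1$ (the degenerate case $c_1=c_2$ is even easier, since then every leaf avoids $c_1$).

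The step I expect to be the main obstacle is pinning down precisely that ``at least $m+2$ leaves with list $\{\alpha,\beta\}$'' is the only obstruction in the first attempt, i.e.\ making the transportation/Hall estimate clean, together with the bookkeeping in the second sub-case (choosing the split of the $\{\alpha,\beta\}$-leaves between $\alpha$ and $\beta$ while accounting for ordinary leaves that happen to be forced onto $\alpha$ or $\beta$). Everything else is routine counting driven by the single inequality $p_1+p_2\ge m+2$.
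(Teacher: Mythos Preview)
Your proposal is correct, and it takes a genuinely different route from the paper. The paper proves Lemma~\ref{lem: Good} by induction on $m$: it deletes one leaf from each star, applies the inductive hypothesis to $K_{1,m-1}+K_{1,m-1}$, and then through a chain of seven observations about the structure of the lists shows that any failure to extend the coloring forces $L(w_0)\cap L(u_0)=\emptyset$, at which point Lemma~\ref{lem: Disjoint} finishes. Your argument is direct rather than inductive: you split immediately on whether $L(w_0)\cap L(u_0)$ is empty, invoke Lemma~\ref{lem: Disjoint} when it is, and in the remaining case construct an equitable $L$-coloring by hand via the Hall/defect argument and the two sub-cases on whether $\beta$ lies in a center list.

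Both proofs isolate the genuine difficulty (and the only use of $m\le 7$) in Lemma~\ref{lem: Disjoint}; in particular, both implicitly establish that $K_{1,m}+K_{1,m}$ is equitably $L$-colorable for \emph{every} $m$ whenever the center lists meet. Your approach buys a shorter, more constructive argument with no induction and no long case chain; the paper's inductive framework is more indirect but makes the reduction to disjoint center lists feel inevitable. The Hall step you flagged as the main obstacle is in fact clean: if the defect condition fails for a set $S$ of leaves, then $|S|>(m+1)|C|$ with $C=\bigcup_{v\in S}(L(v)\setminus\{\alpha\})$, and since $|S|\le 2m$ this forces $|C|=1$, giving exactly your obstruction. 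The bookkeeping in your second sub-case also checks out: with $q_\alpha+q_\beta\le m-2$ forced ordinary leaves and $p_1+p_2$ free special leaves summing to at most $2m<2(m+1)$, an admissible split of the special leaves between $\alpha$ and $\beta$ always exists, and every color outside $\{\alpha,\beta\}$ is used at most $1+(m-2)=m-1$ times.
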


\begin{proof}
Suppose that the two components that make up $G$ are $G_1$ and $G_2$. We will show that $G$ is equitably 2-choosable by induction on $m$. The result holds when $m=1$ and when $m=2$ since $\Delta(G) \leq 2$ in these cases.  So, suppose that $2 < m \leq 7$ and the desired result holds for all natural numbers less than $m$.

Now, suppose the bipartition of $G_1$ is $\{w_0 \}$, $A= \{w_1, \ldots, w_{m} \}$ and the bipartition of $G_2$ is $\{u_0 \}$, $B=\{u_1, \ldots, u_{m} \}$. For the sake of contradiction, suppose there is a 2-assignment $L$ for $G$ such that $G$ is not equitably $L$-colorable. Let $G' = G-\{u_m,w_m\}$ and $K(v) = L(v)$ for all $v \in V(G')$.  By the inductive hypotheses there is an equitable $K$-coloring $f$ of $G'$ which uses no color more than $m$ times. The strategy of the proof is to now determine characteristics of $L$ and to then show that an equitable $L$-coloring of $G$ must exist. Let $L'(u_m) = L(u_m)-\{f(u_0)\}$ and $L'(w_m) = L(w_m) -\{f(w_0)\}$.

\emph{Observation 1}: \emph{$L'(u_m) = L'(w_m)$ and $|L'(u_m)| = 1$}. Suppose that $L'(u_m) \neq L'(w_m)$ or $|L'(u_m)| > 1$.  Notice it is possible to color $u_m$ and $w_m$ with two distinct colors from $L'(u_m)$ and $L'(w_m)$ respectively.  Combining this with $f$ completes an equitable $L$-coloring of $G$ which is a contradiction. 

So, we can assume $ L'(u_m) = L'(w_m) = \{c\}$.

\emph{Observation 2}: $|f^{-1}(c)| = m$. Suppose that $|f^{-1}(c)| < m$. Coloring $u_m$ and $w_m$ with $c$ and the other vertices in $G$ according to $f$ completes an equitable $L$-coloring of $G$ which is a contradiction.

Let $A' = A \cap f^{-1}(c)$ and $B' = B \cap f^{-1}(c)$. Without loss of generality assume that $A' = \{w_1,w_2,\ldots, w_a\}$ and $B' = \{u_1,u_2,\ldots, u_b\}$. Since $f(w_0) \neq c$ and $f(u_0)\neq c$, $a+b = m$. Without loss of generality assume $b \leq a$.  This implies $1 \leq b \leq a \leq m-1$ and $m/2 \leq a$.

\emph{Observation 3}: \emph{For all $v \in A' \cup \{w_m\}$, $L(v) = \{c,f(w_0)\}$, and for all $v \in B' \cup \{u_m\}$, $L(v)= \{c ,f(u_0)\}$.} Suppose that there is a $u_k \in B'$ such that $L(u_k) =\{x,c\}$ where $x \neq f(u_0)$. Since $|f^{-1}(f(u_0))| \geq 1$, $|f^{-1}(c)| = m$, and $|V(G')| = 2m$, we have that $|f^{-1}(x)| < m$.  Now, color all the vertices in $V(G')-\{u_k\}$ according to $f$, and color $u_k$ with $x$.  Coloring $u_m$ and $w_m$ with $c$ completes an equitable $L$-coloring which is a contradiction.  A similar argument can be used to show that for all $v \in A' \cup \{w_m\}$, $L(v) = \{c,f(w_0)\}$.

Now suppose that $L(u_0) = \{f(u_0) , d\}$ and $L(w_0) = \{ f(w_0), l\}$.

\emph{Observation 4}: $d \neq c$. Suppose that $d = c$. Color all the vertices in $V(G')$ according to $f$.  Then, recolor $u_0$ with $c$ and each vertex in $B'$ with $f(u_0)$. Finally, color $u_m$ with $f(u_0)$ and $w_m$ with $c$. Note that the number of times $c$ is used is exactly $a+1+1 \leq m+1$. Also note that the number of times $f(u_0)$ is used is at most $b+1 + \max \{1, m-(a+1) \} \leq m+1$.  So, we have constructed an equitable $L$-coloring of $G$ which is a contradiction.

\emph{Observation 5}: $l = c$. Suppose that $l \neq c$. Color all the vertices in $V(G')$ according to $f$.  Then, recolor $w_0$ with $l$.  Also, color $w_m$ with $f(w_0)$ and $u_m$ with $c$. For each $v \in (A -(A'\cup\{w_m\}))$ such that $f(v) = l$, we recolor $v$ with the element in $L(v) -\{l\}$. Let $r = |\{v \in A-(A'\cup \{w_m\}): f(v)=l\}|$, and note that $0 \leq r \leq m -(a+1) \leq m/2 \leq a$. At this stage, we know that $c$ is used $m+1+z$ times where $z$ is an integer satisfying $0 \leq z \leq r \leq a$. If $z \geq 1$, recolor $w_1, \ldots, w_z$ with $f(w_0)$. Note that the resulting coloring is proper. Moreover, the resulting coloring uses $c$ exactly $m+1$ times.  So, it must be an equitable $L$-coloring of $G$ since $|V(G)| = 2m+2$. This is a contradiction.

We now have that $L(u_0) = \{f(u_0), d\}$, $L(w_0) = \{f(w_0), c\}$, and $c \neq d$.

\emph{Observation 6}: $f(u_0) \neq f(w_0)$. Suppose that $f(u_0) = f(w_0)$. Color all the vertices in $V(G')$ according to $f$.  Then, recolor $w_0$ with $c$, and for each $v\in A'$, recolor $v$ with $f(w_0)$. Finally, color $w_m$ with $f(w_0)$ and $u_m$ with $c$. Note that since no vertices in $B$ are colored with $f(w_0)$, it must be that $f(w_0)$ is used at most $m+1$ times. Also note that $c$ is used at most $b+1+1 \leq m+1$ times. So, we have constructed an equitable $L$-coloring of $G$ which is a contradiction.

\emph{Observation 7}: $f(w_0) \neq d$. Suppose that $f(w_0) =d$. Color all the vertices in $V(G')$ according to $f$.  Then, recolor $w_0$ with $c$ and $u_0$ with $d$. For all $v \in A'$, recolor $v$ with $d$.  Also, recolor all $v \in B -(B'\cup \{u_m\})$ satisfying $f(v) = d$ with the element in $L(v) - \{d\}$. Also, color $w_m$ with $d$ and $u_m$ with $c$.  Our resulting coloring is clearly proper. Note that $d$ is used exactly $a+2$ times which means it is used at most $m+1$ times. Also note that $c$ is used at most $m+1$ times and at least $b+2$ times. Thus $c$ and $d$ are used at least $a+b+4 = m+4 > m+1 = |V(G)|/2$ times. Thus, we have constructed an equitable $L$-coloring of $G$ which is a contradiction.

Observations 6 and 7 allow us to conclude $L(w_0) \cap L(u_0) = \emptyset$ by which Lemma~\ref{lem: Disjoint} implies there exists an equitable $L$-coloring of $G$ which is a contradiction.
\end{proof}

We are now ready to prove Theorem~\ref{thm: 2star} which we restate.

\begin{customthm} {\ref{thm: 2star}}
Let $G= K_{1,m_1} + K_{1,m_2}$ where $1 \leq m_1 \leq m_2$. $G$ is equitably 2-choosable if and only if $m_2-m_1 \leq 1$ and $m_1+m_2 \leq 15$.
\end{customthm}

\begin{proof}
We begin by assuming that $m_2-m_1 \geq 2$ or $m_1+m_2 \geq 16$. By Corollary~\ref{cor: Sub} and Lemma~\ref{lem: Add} we know that in both cases $G$ is not equitably 2-choosable.

Conversely, suppose that $m_1+m_2 \leq 15$ and $m_2-m_1 \leq 1$. In the case that $m_1 = m_2$ we know that the desired result holds by Lemma~\ref{lem: Good}. So we may assume that $m_2 = m_1+1$. Suppose that $L$ is an arbitrary 2-assignment for $G$, and let $m=m_1$. Let the copies of $K_{1,m}$ and $K_{1,m+1}$ that make up $G$ be $G_1$ and $G_2$ respectively. Suppose the bipartition of $G_1$ is $\{w_0 \}$, $A= \{w_1, \ldots, w_{m} \}$ and the bipartition of $G_2$ is $\{u_0 \}$, $B=\{u_1, \ldots, u_{m+1} \}$. Let $G' = G - \{u_{m+1}\}$, and let $L'(v) = L(v)$ for all $v \in V(G')$. We know by Lemma~\ref{lem: Good} that there exists an equitable $L'$-coloring $f$ of $G'$.  Suppose the vertices in $V(G')$ are colored according to $f$.  Note that $\rho(G',2) < \rho(G,2)$. Thus, we can complete an equitable $L$-coloring of $G$ by coloring $u_{m+1}$ with a color in $L(u_{m+1}) - \{f(u_0) \}$.
\end{proof}

We end this section by proving Theorem~\ref{thm: same} which we restate.  It should be noted that when $G=\sum_{i=1}^n K_{1,m}$, $G$ is a forest of maximum degree $m$, and we know that $G$ is equitably 2-choosable when $m \in [2]$ (see Section~\ref{intro}).

\begin{customthm} {\ref{thm: same}}
Suppose $n,m \in \N$, $n \geq 2$, and $G = \sum_{i=1}^n K_{1,m}$.  When $n$ is odd, $G$ is equitably 2-choosable if and only if $m \leq 2$.  When $n$ is even, $G$ is equitably 2-choosable if and only if $m \leq 7$.
\end{customthm}

\begin{proof}
Throughout this argument, let $G_1,G_2,\ldots, G_n$ be the components of $G$.  Let $A'_i = \{w_{i,0}\}$ and $A_i = \{w_{i,1},\ldots,w_{i,m}\}$ be the bipartition of $G_i$ for each $i \in [n]$.

First, suppose that $n$ is odd.  We know that $G$ is equitably 2-choosable when $m\leq 2$.  For the converse, we suppose that $m \geq 3$, and we will construct a 2-assignment $L$ for $G$ for which there is no equitable $L$-coloring.  Let $L(v) = [2]$ for all $v \in V(G)$.  For the sake of contradiction, suppose that $G$ is equitably $L$-colorable.  Let $f$ be an equitable $L$-coloring of $G$.  Note that $$\max\{|f^{-1}(1)|, |f^{-1}(2)|\} \geq m \left\lceil \frac{n}{2} \right\rceil + \left\lfloor \frac{n}{2} \right\rfloor = \frac{nm+m+n-1}{2}.$$ Also note that $\rho(G,2)=\lceil(n(m+1))/2\rceil \leq (n(m+1) + 1)/2 = (nm+n+1)/2$. Since $m \geq 3$ we know that $m-1 > 1$.  Therefore, we see that $\max\{|f^{-1}(1)|, |f^{-1}(2)|\} > \rho$ which is a contradiction.

Now, suppose that $n$ is even.  We begin by assuming that $m \geq 8$.  We will now construct a 2-assignment $L$ for $G$ for which there is no equitable $L$-coloring.  Let $L(v) = [2]$ for all $v \in \bigcup_{i=2}^n(A'_i \cup A_i)$, $L(w_{1,0}) = \{3,4\}$, $L(w_{1,1})=L(w_{1,2}) = \{1,3\}$, $L(w_{1,3})=L(w_{1,4}) = \{1,4\}$, $L(w_{1,5})=L(w_{1,6}) = \{2,3\}$, $L(w_{1,7}) = L(w_{1,8}) = \{2,4\}$, and $L(v) = \{3,4\}$ for all $v \in A_1 - \{w_{1,1},w_{1,2},\ldots,w_{1,8}\}$.  For the sake of contradiction, suppose $G$ is equitably $L$-colorable, and suppose $f$ is an equitable $L$-coloring of $G$. Note that $\rho (G,2) = \lceil n(m+1)/2\rceil = n(m+1)/2$. We calculate
\begin{align*}
\max\{|f^{-1}(1)|, |f^{-1}(2)|\} \geq m \left\lceil \frac{n-1}{2} \right\rceil + \left\lfloor \frac{n-1}{2} \right\rfloor + 2 = \frac{mn}{2} + \frac{n-2}{2} +2 = \frac{mn+n+2}{2}.
\end{align*}
It is easy to see that $\max\{|f^{-1}(1)|, |f^{-1}(2)|\} > \rho$ which is a contradiction. Thus, $G$ is not equitably 2-choosable.  Conversely, suppose that $m \leq 7$. Let $G^{(i)} = G_{2i}+G_{2i-1}$ for all $i \in [n/2]$. Suppose that $L$ is an arbitrary $2$-assignment for $G$. Let $L^{(i)}$ be $L$ restricted to the vertices of $G^{(i)}$. By Theorem~\ref{thm: 2star} we know that there is a proper $L^{(i)}$-coloring $f_i$ of $G^{(i)}$ that uses no color more than $m+1$ times for each $i \in [n/2]$. Coloring the vertices of $G$ according to $f_1, \ldots, f_{n/2}$ achieves an equitable $L$-coloring of $G$ since no color could possibly be used more than $(m+1)n/2 = \rho$ times.
\end{proof}

\section{Equitable Choosability of the Disjoint Union of Two Stars} \label{2stars}

We begin by stating an inductive process that will be used throughout the remainder of the paper. Note here $\epsilon$ is used to indicate `equitable'.

\begin{proc}\label{proc: greedy}
\textbf{$\epsilon$-greedy process}: The $\epsilon$-greedy process takes as input: a graph $G = G_1 + G_2$ where $G_i$ is a copy of $K_{1,m_i}$ for $ i \in [2]$, and a $k$-assignment $L$ where $k \geq 3$. It outputs $G_\epsilon$ where $G_\epsilon$ is an induced subgraph of $G$, a list assignment $L_\epsilon$ for $G_\epsilon$, and a partial $L$-coloring $g_\epsilon$ of $G$ that colors the vertices in $V(G)-V(G_\epsilon)$.

Suppose the bipartition of $G_1$ is $\{w_0 \}$, $A= \{w_1, \ldots, w_{m_1} \}$ and the bipartition of $G_2$ is $\{u_0 \}$, $B=\{u_1, \ldots, u_{m_2} \}$. To begin we determine whether there is a color that appears in at least $\rho(G,k)$ of the lists associated with the vertices in $A \cup B$. If no such color exists let $G_\epsilon =G$, $L_\epsilon = L$, and $g_\epsilon$ be a function with an empty domain, then the process terminates. Otherwise there exists a color $c_1$ that is in at least $\rho$ of the lists associated with the vertices in $A \cup B$, and we arbitrarily put $\rho$ of these vertices in a set $C_1$.  We consider this the first step of the $\epsilon$-greedy process.

If $k=3$ let $G_\epsilon = G - C_1$, $L_\epsilon$ be the list assignment for $G_\epsilon$ given by: $L_\epsilon (v) = L(v) - \{c_1\}$ for all $v \in V(G_\epsilon)$, and $g_\epsilon: C_1 \rightarrow \{c_1\}$ be the partial $L$-coloring of $G$ given by $g_\epsilon(v) =c_1$ whenever $v \in C_1$, then the process terminates.

If $k \geq 4$, we proceed inductively. For each $t= 2, \ldots , k-2$ if the process has not terminated in the $(t-1)$th step we determine whether there is a color in $\mathcal{L}-\{c_1, \ldots , c_{t-1}\}$ that appears in at least $\rho$ of the lists associated with the vertices in $(A \cup B)- \bigcup_{i=1}^{t-1}C_i$. If no such color exists let $G_\epsilon = G - \bigcup_{i=1}^{t-1} C_i$, $L_\epsilon$ be the list assignment for $G_\epsilon$ given by: $L_\epsilon (v) = L(v) - \{c_i: i \in [t-1]\}$ for all $v \in V(G_\epsilon)$, and $g_\epsilon: \bigcup_{i=1}^{t-1}C_i \rightarrow \{c_i: i \in [t-1]\}$ be the partial $L$-coloring of $G$ given by $g_\epsilon(v) =c_i$ whenever $v \in C_i$, then the process terminates. Otherwise there exists a color $c_t \in \mathcal{L}-\{c_1, \ldots , c_{t-1}\}$ that is in at least $\rho$ of the lists associated with the vertices in $(A \cup B)- \bigcup_{i=1}^{t-1}C_i$, and we arbitrarily put $\rho$ of these vertices in a set $C_t$.

If the process does not terminate when $t = k-2$ let $G_\epsilon = G - \bigcup_{i=1}^{k-2} C_i$, $L_\epsilon$ be the list assignment for $G_\epsilon$ given by: $L_\epsilon (v) = L(v) - \{c_i: i \in [k-2]\}$ for all $v \in V(G_\epsilon)$, and $g_\epsilon: \bigcup_{i=1}^{k-2}C_i \rightarrow \{c_i: i \in [k-2]\}$ be the partial $L$-coloring of $G$ given by $g_\epsilon(v) =c_i$ whenever $v \in C_i$, then the process terminates.
\end{proc}

By the definition of the $\epsilon$-greedy process we easily obtain the following observation and two lemmas.

\begin{obs}\label{obs: obs2}
Suppose that the $\epsilon$-greedy process is run on $G = K_{1,m_1}+K_{1,m_2}$ with a $k$-assignment $L$ where $k \geq 3$. Then $|L_\epsilon (v)| \geq 2$ for all $v \in V(G_\epsilon)$.
\end{obs}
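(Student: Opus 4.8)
The plan is to bookkeep how many colors the $\epsilon$-greedy process can strip from the lists. First I would read off from the definition of the process that, in every one of its three possible termination cases (terminating before step $1$, terminating at some step $t$ with $2 \le t \le k-2$, or running through step $k-2$), the output list assignment has the uniform shape $L_\epsilon(v) = L(v) - \{c_1,\dots,c_s\}$ for all $v \in V(G_\epsilon)$, where $s$ denotes the number of completed steps and $c_1,\dots,c_s$ are the (distinct) colors designated in those steps. In the first case $s=0$ and $L_\epsilon = L$; in the other two cases the relevant assignment is exactly the one written in the process description.

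Next I would bound $s$. Since the process begins with step $1$ and then the loop index $t$ only takes the values $2,3,\dots,k-2$, it performs at most $k-2$ steps in total, so $s \le k-2$. (This is the only place the hypothesis $k \ge 3$ is used: it guarantees $k-2 \ge 1$, so the bound is meaningful, and more to the point it forces $k-(k-2)=2$.) Finally, because $L$ is a $k$-assignment we have $|L(v)| = k$ for every $v \in V(G)$, hence for every $v \in V(G_\epsilon) \subseteq V(G)$, and therefore
$$
|L_\epsilon(v)| \;=\; \bigl|L(v) - \{c_1,\dots,c_s\}\bigr| \;\ge\; |L(v)| - s \;\ge\; k - (k-2) \;=\; 2 .
$$
Note that the centers $w_0$ and $u_0$ are never placed in any $C_i$, so they always survive into $G_\epsilon$; but they satisfy the same inequality, so no separate treatment is needed.

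I do not expect any genuine obstacle here: the statement is a routine consequence of the fact that the process never removes more than $k-2$ colors, which is immediate from how the step counter is defined. The only point requiring a modicum of care is checking that all three termination branches of the process yield a list assignment of the claimed form $L(v)-\{c_1,\dots,c_s\}$ with $s \le k-2$, after which the one-line inequality above finishes the proof.
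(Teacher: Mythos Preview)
Your argument is correct and is exactly the intended reasoning: the paper does not supply a separate proof, stating instead that the observation follows directly from the definition of the $\epsilon$-greedy process. Your bookkeeping that at most $k-2$ colors are removed, so each surviving list has size at least $k-(k-2)=2$, is precisely what ``by the definition'' unpacks to.
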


\begin{lem}\label{lem: thing1}
Suppose that the $\epsilon$-greedy process is run on $G = K_{1,m_1} + K_{1,m_2}$ with a $k$-assignment $L$. Let $\{w_0 \}$ (resp., $\{u_0 \}$) denote the partite set of size one for the copy of $K_{1,m_1}$ (resp., $K_{1,m_2}$) used to form $G$. If no color appears in at least $\rho (G,k)$ of the lists associated with the vertices in $V(G_\epsilon) - \{w_0,u_0\}$ by $L_\epsilon$, then $G$ is equitably $L$-colorable.
\end{lem}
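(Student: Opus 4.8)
The plan is to feed $G$ and $L$ into the $\epsilon$-greedy process and then extend its output to an equitable $L$-coloring of $G$ by a single greedy sweep over the leftover graph. Running the process yields $G_\epsilon$, the list assignment $L_\epsilon$, and the partial $L$-coloring $g_\epsilon$ that colors every vertex of $V(G)-V(G_\epsilon)$. The facts I would record first are: each color $c_i$ introduced during the process is assigned by $g_\epsilon$ to exactly $\rho := \rho(G,k)$ vertices (since $|C_i|=\rho$), and no $L_\epsilon(v)$ contains any such $c_i$; the process deletes only leaves, so $w_0,u_0\in V(G_\epsilon)$ and $G_\epsilon = K_{1,m_1'}+K_{1,m_2'}$ for some $m_i'\le m_i$, with the two components centered at $w_0$ and $u_0$; and by Observation~\ref{obs: obs2}, $|L_\epsilon(v)|\ge 2$ for every $v\in V(G_\epsilon)$.

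Next I would color $V(G_\epsilon)$. First pick $f(w_0)\in L_\epsilon(w_0)$ and $f(u_0)\in L_\epsilon(u_0)$ with $f(w_0)\ne f(u_0)$, which is possible because both lists have size at least $2$. Then color each leaf $v$ of $G_\epsilon$ with any color of $L_\epsilon(v)$ other than the color of its unique neighbor among $\{w_0,u_0\}$; such a color exists since $|L_\epsilon(v)|\ge 2$. Because the two stars comprising $G$ are vertex-disjoint and $f(w_0),f(u_0)$ avoid all the $c_i$, combining this coloring of $G_\epsilon$ with $g_\epsilon$ on $V(G)-V(G_\epsilon)$ produces a proper $L$-coloring $f$ of $G$.

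It then remains to check that no color is used more than $\rho$ times. A color of the form $c_i$ is used exactly $\rho$ times by $g_\epsilon$ and never on $G_\epsilon$. A color $c$ that is neither some $c_i$ nor $f(w_0)$ nor $f(u_0)$ occurs only on leaves of $G_\epsilon$ whose $L_\epsilon$-list contains $c$, and by hypothesis there are at most $\rho-1$ such leaves. Finally, $c=f(w_0)$ (the case $f(u_0)$ being symmetric) is used once on $w_0$, never by $g_\epsilon$, never on a leaf of the $w_0$-star, and — again by the hypothesis — on at most $\rho-1$ leaves of the $u_0$-star, for a total of at most $\rho$. Hence $f$ is an equitable $L$-coloring of $G$.

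I expect the only real subtlety to be the counting in the last step: the point to extract is that the hypothesis already bounds, for every color simultaneously, the number of leftover leaves capable of receiving it, so no clever choice of leaf colors is needed; and insisting that $w_0$ and $u_0$ receive distinct colors is what keeps the argument correct in the degenerate situation $\rho=1$, when $G_\epsilon$ has no leaves at all.
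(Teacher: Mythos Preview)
Your proof is correct and follows essentially the paper's approach: obtain a proper $L_\epsilon$-coloring of $G_\epsilon$ (the paper simply invokes that $G_\epsilon$ is 2-choosable, you spell out the greedy argument), observe that the hypothesis bounds each color's multiplicity on the leftover leaves, and merge with $g_\epsilon$. Your insistence on $f(w_0)\ne f(u_0)$ is in fact slightly more careful than the paper's own proof, whose blanket claim that \emph{any} proper $L_\epsilon$-coloring uses each color at most $\rho$ times tacitly needs $\rho\ge 2$.
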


\begin{proof}
Note that since no color appears in at least $\rho (G,k)$ of the lists associated with the vertices in $V(G_\epsilon) - \{w_0,u_0\}$ by $L_\epsilon$, any proper $L_\epsilon$-coloring cannot possibly use a color more than $\rho (G,k)$ times. By Observation~\ref{obs: obs2} and the fact that $G_\epsilon$ is 2-choosable there must exist a proper $L_\epsilon$-coloring of $G_\epsilon$. Such a coloring combined with $g_\epsilon$ yields an equitable $L$-coloring of $G$.
\end{proof}

\begin{lem}\label{lem: thing2}
Suppose that the $\epsilon$-greedy process is run on $G = K_{1,m_1} + K_{1,m_2}$ with a $k$-assignment $L$. Let $\{w_0 \}$ (resp., $\{u_0 \}$) denote the partite set of size one for the copy of $K_{1,m_1}$ (resp., $K_{1,m_2}$) used to form $G$.  If there is a color that appears in at least $\rho (G,k)$ of the lists associated with the vertices in $V(G_\epsilon) - \{w_0,u_0\}$ by $L_\epsilon$, then $|\Ran(g_\epsilon)| = k-2$.
\end{lem}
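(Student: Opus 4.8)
The plan is to prove the contrapositive: if $|\Ran(g_\epsilon)| \neq k-2$, then no color appears in at least $\rho(G,k)$ of the lists $L_\epsilon(v)$ taken over $v \in V(G_\epsilon) - \{w_0,u_0\}$. The first thing I would do is read off from the $\epsilon$-greedy process that $\Ran(g_\epsilon) = \{c_1,\ldots,c_s\}$, a set of $s$ distinct colors, where $s$ is the number of colors selected before the process halts; one checks directly that $s \le k-2$ always, and that $s = k-2$ exactly when the process runs all the way through its $(k-2)$th step instead of halting in one of the ``if no such color exists'' branches (when $k=3$ this branch is simply the initial search for $c_1$ failing). Hence $|\Ran(g_\epsilon)| \neq k-2$ forces the process to have halted in such a branch at some stage $s \in \{0,1,\ldots,k-3\}$, where $s=0$ records failure of the very first search.

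Next I would unwind what halting in that branch yields. Writing $D = \bigcup_{i=1}^{s} C_i$ (with $D = \emptyset$ when $s=0$), the process outputs $G_\epsilon = G - D$ and $L_\epsilon(v) = L(v) - \{c_1,\ldots,c_s\}$ for every $v \in V(G_\epsilon)$, and its stopping condition says precisely that no color of $\mathcal{L} - \{c_1,\ldots,c_s\}$ lies in at least $\rho$ of the lists $L(v)$ over $v \in (A\cup B) - D$. Since each $C_i \subseteq A\cup B$, we have $D \subseteq A\cup B$, so $w_0,u_0 \in V(G_\epsilon)$ and $V(G_\epsilon) - \{w_0,u_0\} = (A\cup B) - D$.

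The remaining step is to transfer this statement from $L$ to $L_\epsilon$, which I would do by splitting on a candidate color $c \in \mathcal{L}$. If $c \in \{c_1,\ldots,c_s\}$, then $c$ has been deleted from every list of $L_\epsilon$, so it appears in $0 < \rho$ of the lists of $L_\epsilon$ over $V(G_\epsilon) - \{w_0,u_0\}$. If $c \notin \{c_1,\ldots,c_s\}$, then for $v \in (A\cup B) - D$ we have $c \in L_\epsilon(v)$ if and only if $c \in L(v)$, so the number of $v \in V(G_\epsilon) - \{w_0,u_0\}$ with $c \in L_\epsilon(v)$ equals the number of $v \in (A\cup B) - D$ with $c \in L(v)$, which is strictly less than $\rho$ by the stopping condition. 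Either way no color reaches $\rho$ lists, which is exactly the contrapositive, and the lemma follows.

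I do not expect a genuine obstacle here: the whole argument is a careful unwinding of the definition of the $\epsilon$-greedy process. The points that do need attention are keeping the process's indexing straight, handling the degenerate stopping stage $s=0$ (the initial search) uniformly by taking $D = \emptyset$, and observing that the stopping condition only quantifies over colors lying outside $\{c_1,\ldots,c_s\}$ — which is precisely why the case split on whether $c$ is among $c_1,\ldots,c_s$ is needed.
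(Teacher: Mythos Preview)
Your proposal is correct and follows essentially the same contrapositive argument as the paper: both observe that $|\Ran(g_\epsilon)| < k-2$ forces the process to have halted in a ``no such color exists'' branch, and then note that a color appearing in $\rho$ of the $L_\epsilon$-lists on $V(G_\epsilon)-\{w_0,u_0\}$ would have allowed the process to continue one more step, a contradiction. The paper states this tersely in two lines, while you spell out the case split on whether the candidate color lies in $\{c_1,\ldots,c_s\}$ and the identification $V(G_\epsilon)-\{w_0,u_0\} = (A\cup B)-D$; these details are exactly what the paper's phrase ``would have been able to continue'' is implicitly invoking.
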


\begin{proof}
We prove the contrapositive.  Suppose that $|\text{Ran}(g_\epsilon)| \neq k-2$. It is easy to verify that $|\text{Ran}(g_\epsilon)| < k-2$. For the sake of contradiction, suppose that there exists a color that appears in at least $\rho (G,k)$ of the lists associated with the vertices $V(G_\epsilon) - \{w_0,u_0\}$ by $L_\epsilon$. This implies that the $\epsilon$-greedy process would have been able to continue to the $(|\text{Ran}(g_\epsilon)| +1)$th step, a contradiction.
\end{proof}

Before proving Theorem~\ref{thm: Main} we prove the following Lemma.

\begin{lem} \label{lem: reduce}
Suppose that $0 \leq m_1$ and $m_2 \geq \max \{2, m_1 \}$. Let $G = K_{1,m_1} + K_{1,m_2}$ with $A$ (resp., $B$) denoting the partite set of size $m_1$ (resp., $m_2$) in the copy of $K_{1,m_1}$ (resp., $K_{1,m_2}$) used to form $G$.  Suppose $L$ is a list assignment for $G$ such that: $|L(b)| \geq 3$ for all $b \in B$, $|L(a)| \geq 2$ for all $a \in V(G)-B$, and there is a color $c$ that appears in at least $\lfloor (m_1+m_2+2)/2 \rfloor$ of the lists associated with the vertices in $A \cup B$. Then, there exists a proper $L$-coloring of $G$ that uses no color more than $\lfloor (m_1+m_2+2)/2 \rfloor$ times.
\end{lem}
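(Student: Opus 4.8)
The plan is to reduce the statement to a purely combinatorial ``equitable transversal'' problem for the leaves of the two stars, and then invoke Hall's theorem. Write $\{w_0\}\cup A$ for the vertex set of the copy of $K_{1,m_1}$ and $\{u_0\}\cup B$ for that of $K_{1,m_2}$, and set $\rho = \floor{(m_1+m_2+2)/2}$, so $|V(G)| = m_1+m_2+2 \in \{2\rho,\,2\rho+1\}$. The only arithmetic facts I will use are: $\rho \geq 2$ (from $m_2\geq 2$), $m_1 \leq \rho-1$ (from $m_2\geq m_1$), and $m_1+m_2 \leq 2\rho-1$.

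First I would colour the two centres and pass to the leaves. Pick $d_1 \in L(w_0)$ and $d_2 \in L(u_0)$ arbitrarily, put $f(w_0)=d_1$ and $f(u_0)=d_2$ (possibly $d_1=d_2$), and for each leaf $v$ set $L'(v)=L(v)\setminus\{d_1\}$ if $v\in A$ and $L'(v)=L(v)\setminus\{d_2\}$ if $v\in B$; then $|L'(a)|\geq 1$ for $a\in A$, $|L'(b)|\geq 2$ for $b\in B$, and crucially $d_1\notin L'(a)$ for every $a\in A$ and $d_2\notin L'(b)$ for every $b\in B$. Since the only edges of $G$ join a centre to a leaf of its own star, \emph{any} extension of $f$ by a choice $f(v)\in L'(v)$ over all leaves $v$ is a proper $L$-colouring of $G$. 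Writing $t_\gamma\in\{0,1,2\}$ for the number of centres coloured $\gamma$ (so $\rho-t_\gamma\geq 0$ because $t_\gamma\leq 2\leq\rho$), it therefore suffices to choose $f(v)\in L'(v)$ for each leaf so that no colour $\gamma$ is used on more than $\rho-t_\gamma$ leaves: then every colour is used at most $\rho$ times overall.

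Such a choice is exactly a matching saturating the leaf side in the bipartite graph $H$ whose other side has $\rho-t_\gamma$ ``slots'' for each colour $\gamma$ occurring in some $L'(v)$, with $v$ joined to every slot of every colour in $L'(v)$. For a set $S$ of leaves write $\mathcal{C}(S)=\bigcup_{v\in S}L'(v)$, so the $H$-neighbourhood of $S$ has size $\sum_{\gamma\in\mathcal{C}(S)}(\rho-t_\gamma)$. Only $d_1$ and $d_2$ can have $t_\gamma>0$, and each $\gamma\in\mathcal{C}(S)$ differs from at least one of $d_1,d_2$ (it lies in some $L'(v)$), so $t_\gamma\leq 1$ for $\gamma\in\mathcal{C}(S)$ and $\sum_{\gamma\in\mathcal{C}(S)}t_\gamma\leq 2$. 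I would then verify Hall's condition by cases on $|\mathcal{C}(S)|$: if $|\mathcal{C}(S)|\geq 3$, the neighbourhood has size $\geq 3\rho-2\geq 2\rho-1\geq m_1+m_2\geq|S|$; if $|\mathcal{C}(S)|=1$, then $S\subseteq A$ (a $B$-leaf has list size $\geq 2$) and the single colour is $\neq d_1$, so the neighbourhood has size $\geq\rho-1\geq m_1\geq|S|$; and if $|\mathcal{C}(S)|=2$, the neighbourhood has size $\geq 2\rho-2$, with equality only when $\mathcal{C}(S)=\{d_1,d_2\}$ and $d_1\neq d_2$. In that last subcase no $B$-leaf can lie in $S$, since such a $b$ would satisfy $L'(b)\subseteq\{d_1,d_2\}$ and $d_2\notin L'(b)$, forcing $|L'(b)|\leq 1$; hence $S\subseteq A$ and $|S|\leq m_1\leq\rho-1\leq 2\rho-2$. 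In every other $|\mathcal{C}(S)|=2$ subcase the neighbourhood has size $\geq 2\rho-1\geq m_1+m_2\geq|S|$. Thus Hall's condition holds, a saturating matching exists, and reading off the matched colours completes the colouring.

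The case I expect to demand the most care is $|\mathcal{C}(S)|=2$: the easy bound $2\rho-2$ there falls one short of $m_1+m_2$ when $|V(G)|$ is odd, so the argument genuinely uses $|L(b)|\geq 3$ for $b\in B$ together with $B\neq\emptyset$ (which is where the hypothesis $m_2\geq 2$ enters) to exclude a $B$-leaf with $\mathcal{C}(\{b\})\subseteq\{d_1,d_2\}$. I would also remark that this argument uses neither the hypothesis that some colour $c$ lies in at least $\rho$ of the leaf lists nor any special choice of $d_1,d_2$; a more computational alternative that does exploit the $c$-hypothesis --- colour $\rho$ leaves with $c$, giving priority to the at most $m_1\leq\rho-1$ leaves forced to $c$, and then distribute the remaining at most $\rho-1$ leaves --- is also available, but the Hall argument is shorter and handles the parity-odd case uniformly.
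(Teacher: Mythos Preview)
Your proof is correct and takes a genuinely different route from the paper's. The paper exploits the hypothesised colour $c$ directly: it colours with $c$ every vertex of $A$ whose list contains $c$, tops this up to exactly $\sigma=\lfloor(m_1+m_2+2)/2\rfloor$ vertices using leaves of $B$, deletes these $\sigma$ vertices and the colour $c$, and then greedily properly colours the remaining graph $G'$; since $|V(G')|\in\{\sigma,\sigma+1\}$ and $G'$ contains an edge whenever $|V(G')|>2$, any proper colouring of $G'$ uses no colour more than $\sigma$ times. Your argument instead colours the two centres arbitrarily and reduces the leaf‐colouring problem to a system of distinct representatives with multiplicities, verifying Hall's condition by a short case analysis on $|\mathcal{C}(S)|$.

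The trade‐offs are as follows. The paper's argument is entirely elementary and avoids Hall's theorem, but it genuinely needs the hypothesis that some colour $c$ lies in at least $\sigma$ leaf lists (to guarantee one can spend a full colour class on leaves and leave at most $\sigma+1$ vertices behind). Your argument uses a standard but slightly heavier tool, yet in return it proves a strictly stronger statement: the conclusion holds for \emph{every} list assignment with $|L(b)|\geq 3$ on $B$ and $|L(a)|\geq 2$ elsewhere, so the ``frequent colour $c$'' hypothesis is superfluous. Your Hall verification also makes transparent exactly where the asymmetry $|L(b)|\geq 3$ versus $|L(a)|\geq 2$ is used, namely in the $|\mathcal{C}(S)|\in\{1,2\}$ cases to force $S\subseteq A$ and hence $|S|\leq m_1\leq\rho-1$.
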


\begin{proof}
Let $\sigma = \lfloor (m_1+m_2+2)/2 \rfloor$ and $C = \{v \in A\cup B: c \in L(v)\}$. We begin by coloring all the vertices in $C \cap A$ with $c$, and note that less than $\sigma$ vertices are colored in doing this since $|A| = m_1 < \sigma (G,k)$. We arbitrarily color $\sigma - |C \cap A|$ vertices in $B \cap C$ with $c$.  Let $C_1$ be the set of vertices colored with $c$, and let $G' = G - C_1$. Let $L'(v) = L(v) - \{c\}$ for all $v \in V(G')$. Let $\{w_0\}$ (resp., $\{u_0\}$) be the partite set of size 1 in the copy of $K_{1,m_1}$ (resp., $K_{1,m_2}$) used to form $G$. Note that $|L'(w_0)| \geq 1$, $|L'(u_0)| \geq 1$, and $|L'(v)|\geq 2$ for all $v \in V(G') - \{w_0,u_0\}$.  Now, order the vertices of $G'$ in such a way that $u_0$ and $w_0$ are the first two.  Then, greedily color the vertices so that a proper $L'$-coloring of $G'$ is achieved. Note that the resulting proper $L'$-coloring either uses at least 2 colors or $|V(G')|=2$.  Consequently, the resulting proper $L'$-coloring uses no color more than $\sigma$ times (note that $m_2 \geq 2$ implies that $\sigma \geq 2$ for the case in which $|V(G')| = 2$). It follows that this proper $L'$-coloring of $G'$ completes a proper $L$-coloring of $G$ with the desired property.
\end{proof}

We are now ready to prove Theorem~\ref{thm: Main} which we will restate.

\begin{customthm} {\ref{thm: Main}}
Let $k \in \N$, $1 \leq m_1 \leq m_2$, and $\rho=\lceil (m_1+m_2+2)/k\rceil$. If $m_2 \leq \rho(k-1)-1$ and $m_1+m_2 \leq 15 + \rho(k-2) $ then $K_{1,m_1} + K_{1,m_2}$ is equitably $k$-choosable.
\end{customthm}

\begin{proof}
Let $G= K_{1,m_1} + K_{1,m_2}$.  Suppose the bipartition of the copy of $K_{1,m_1}$ used to form $G$ is $\{w_0 \}$, $A= \{w_1, \ldots, w_{m_1} \}$, and suppose the bipartition of the copy of $K_{1,m_2}$ used to form $G$ is $\{u_0 \}$, $B=\{u_1, \ldots, u_{m_2} \}$. The result is obvious when $k=1$, and it follows from Theorem~\ref{thm: 2star} when $k=2$.  The result is also obvious when $\rho= 1$.  So, we suppose that $k \geq 3$ and $\rho \geq 2$.

Suppose $L$ is an arbitrary $k$-assignment of $G$.  We must show an equitable $L$-coloring of $G$ exists.  Let $\mathcal{C}$ denote the set of all partial $L$-colorings $f: D \rightarrow \mathcal{L}$ of $G$ such that $D \subset A \cup B$, $|D| = \rho (k-2)$, $|f(D)| = k-2$, and each color class associated with $f$ is of size $\rho $. Notice that elements of $\mathcal{C}$ need not have the same domain. Suppose that we run the $\epsilon$-greedy process on $G$ and $L$. If there is no color that appears in at least $\rho $ of the lists associated with the vertices in $V(G_\epsilon) - \{w_0,u_0\}$ by $L_\epsilon$, we know by Lemma~\ref{lem: thing1} that $G$ is equitably $L$-colorable. So we may assume that there exists a color that appears in at least $\rho$ of the lists associated with the vertices in $V_\epsilon - \{w_0,u_0\}$ by $L_\epsilon$. By Lemma~\ref{lem: thing2} we know that $|\text{Ran}(g_\epsilon)| = k-2$. It is then easy to verify that $g_\epsilon \in \mathcal{C}$.  For each $f \in \mathcal{C}$ let $U_A^f$ (resp., $U_B^f$) be the set of vertices in $A$ (resp., $B$) not colored by $f$.

Among all elements of $\mathcal{C}$ we choose a function $g: D' \rightarrow \mathcal{L}$ such that $||U_A^g| -|U_B^g||$ is as small as possible. Let $\mu_A = |U_A^g|$, $\mu_B = |U_B^g|$, and $G' = G -D'$. Note that it is possible that either $\mu_A = 0$ or $\mu_B = 0$. Also note that $G'$ is a copy of $K_{1,\mu_A} + K_{1,\mu_B}$. If $\mu_A = 0$ (resp., $\mu_B = 0$) then $G'$ would be a copy $K_1 + K_{1,\mu_B}$ (resp., $K_1 + K_{1,\mu_A}$). Let $L'$ be the list assignment for $G'$ defined as follows: $L'(v) = L(v) - g(D')$ for all $v \in V(G')$. Note that $|L'(v)| \geq 2$ for all $v \in V(G')$.  Suppose that $U_A^g = \{a_1, \ldots, a_{\mu_A} \}$ and $U_B^g = \{b_1, \ldots, b_{\mu_B} \}$. Note that there must be a color that appears in at least $\rho$ of the lists assigned by $L'$ to the vertices in $U_A^g \cup  U_B^g$, for if this was not so we could complete an equitable $L$-coloring of $G$ through a similar approach to that of the proof of Lemma~\ref{lem: thing1}.

We now show that an equitable $L$-coloring of $G$ exists in each of the following three cases:
\begin{enumerate}
\item $|\mu_A - \mu_B| \leq 1$;
\item $\mu_B - \mu_A \geq 2$ and $U_A^g \neq A$, or $\mu_A - \mu_B \geq 2$;
\item $\mu_B - \mu_A \geq 2$ and $U_A^g = A$.
\end{enumerate}

For case one notice that $\mu_A$ and $\mu_B$ are positive since $\rho \geq 2$.  Now, for each $v \in V(G')$ such that $|L'(v)| > 2$ we arbitrarily delete colors from $L'(v)$ until it is of size 2. After this is complete, $L'$ is a 2-assignment for $G'$. Note that $\mu_A + \mu_B = m_1+m_2-\rho (k-2) \leq 15$. Theorem~\ref{thm: 2star} implies that an equitable $L'$-coloring $h$ of $G'$ exists. Since $|V(G)| \leq k \rho $, we know that $|V(G')| \leq 2 \rho $. So $h$ uses no color more than $\rho $ times. Combining $h$ and $g$ completes an equitable $L$-coloring of $G$.

For the second case first suppose that $\mu_B - \mu_A \geq 2$ and $U_A^g \neq A$. Clearly $\mu_B \geq \max\{2, \mu_A \}$. We claim that for each $b_i \in U_B^g$, $|L'(b_i)| \geq 3$.  To see why this is so, suppose there is some $b_j \in U_B^g$ such that $|L'(b_j)|=2$.  Then, $g(D') \subset L(b_j)$.  Since $U_A^g \neq A$, there is a $w \in A - U_A^g$.  Now, we can construct an element $h$ of $\mathcal{C}$ from $g$ by removing the color $g(w)$ from vertex $w$ and coloring $b_j$ with $g(w)$.  Then, $|U_B^h| - |U_A^h| < \mu_B - \mu_A$ which is a contradiction to the minimality of $|\mu_B - \mu_A|$.  So, for each $b_i \in U_B^g$, $|L'(b_i)| \geq 3$.  Since $(\mu_A + \mu_B + 2)/2 =   |V(G')|/2 \leq \rho $ and there is a color in at least $\rho $ of the lists assigned by $L'$ to the vertices in $U_A^g \cup U_B^g$, Lemma~\ref{lem: reduce} implies that there is a proper $L'$-coloring of $G'$ that uses no color more than $\rho $ times.  Combining such a coloring with $g$ completes an equitable $L$-coloring of $G$.

If instead we have that $\mu_A - \mu_B \geq 2$, we claim that $U_B^g \neq B$.  To see why this is so, note that if $U_B^g = B$, then we have that $m_2 = |B| = \mu_B \leq \mu_A - 2 < |A|=m_1$ which is a contradiction.  Since $U_B^g \neq B$ an argument similar to the argument employed at the start of the second case can be used to show that there is an equitable $L$-coloring of $G$.

Finally, we turn our attention to case three, and we suppose that $\mu_B - \mu_A \geq 2$ and $U_A^g = A$. Note that clearly $\mu_B \geq \mu_A > 0$. In this case we let $d = \mu_B - \mu_A$.  Also, as in case one, for each $v \in V(G')$ such that $|L'(v)| > 2$ we arbitrarily delete colors from $L'(v)$ until it is of size 2 so that $L'$ becomes a 2-assignment for $G'$.  By the given bound on $m_2$ and the fact that $U_A^g = A$, we know that:
$$\mu_B = m_2 - \rho (k-2) \leq \rho (k-1) - 1 - \rho (k-2) = \rho  - 1.$$
Now, let $G'' = G' - \{b_1, \ldots, b_d \}$, and let $L''$ be the 2-assignment for $G''$ obtained by restricting the domain of $L'$ to $V(G'')$.  Clearly, $G''$ is a copy of $K_{1, \mu_A} + K_{1, \mu_A}$.  Also,  $2\mu_A \leq \mu_A + \mu_B  = m_1+m_2-\rho (k-2) \leq 15$.  Theorem~\ref{thm: 2star} implies that there is an equitable $L''$-coloring $h$ of $G''$; that is, $h$ is a proper $L''$-coloring of $G''$ that uses no color more than $\mu_A + 1$ times.  Now, we extend $h$ to a proper $L'$-coloring of $G'$ by coloring each $b_i \in  \{b_1, \ldots, b_d \}$ with an element in $L'(b_i) - \{h(u_0)\}$.  The proper $L'$-coloring that we obtain clearly uses no color more than $\mu_A+1+d = \mu_B+1$ times which immediately implies that it uses no color more than $\rho $ times.  Combining this proper $L'$-coloring with $g$ completes an equitable $L$-coloring of $G$.
\end{proof}

Next, we demonstrate that for $k \geq 2$, we can not drop the second inequality in the statement of Theorem~\ref{thm: Main}.

\begin{pro} \label{pro: tconstruct}
Suppose $k \geq 2$.  Then, $K_{1, (k-1)(k^3-k+2)} + K_{1, k^3}$ is not equitably $k$-choosable.
\end{pro}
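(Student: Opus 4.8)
\textit{The plan.} The goal is to exhibit an explicit $k$-assignment $L$ for $G:=K_{1,m_1}+K_{1,m_2}$, where $m_1=k^3$ and $m_2=(k-1)(k^3-k+2)$, that admits no equitable $L$-coloring. The starting point is the arithmetic of this particular pair. Writing $\rho=\lceil (m_1+m_2+2)/k\rceil$, one checks that $m_1+m_2+2=k^4-k^2+3k$ is divisible by $k$, so $\rho=k^3-k+3$ and $|V(G)|=k\rho$ exactly; hence a proper $L$-coloring is equitable precisely when no color is used more than $\rho$ times, and when $|\mathcal{L}|=k$ every color is used exactly $\rho$ times. Moreover $m_1=(\rho-1)+(k-2)$ and $m_2=(k-1)(\rho-1)$, while $m_1+m_2-\rho(k-2)=2k^3-2k+4\ge 16$, so Theorem~\ref{thm: Main} does not apply; by contrast $m_2=(k-1)\rho-(k-1)\le\rho(k-1)-1$, so the first inequality of Theorem~\ref{thm: Main} (equivalently, the necessary condition of Lemma~\ref{lem: Only2}) does hold. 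This is the sense in which the example is extremal, and it also explains why a palette of size exactly $k$ will not suffice: with the constant $k$-assignment the budgets are tight but consistent, so $L$ must use extra colors.

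\textit{The assignment.} I would build $L$ as a generalization of the one in the proof of Lemma~\ref{lem: Add} (the case $k=2$, where $G=K_{1,8}+K_{1,8}$ and the statement is literally Lemma~\ref{lem: Add}). The palette would consist of $k-1$ ``main'' colors $1,\dots,k-1$, two ``trigger'' colors, and $k-2$ ``absorbing'' colors $d_1,\dots,d_{k-2}$. On the big star $K_{1,m_2}$ I would put lists confining its $m_2=(k-1)(\rho-1)$ leaves to the main colors so that, in any equitable $L$-coloring, each main color must be used $\rho-1$ times on these leaves and is thus left with a single unit of slack. On the small star $K_{1,k^3}=K_{1,(\rho-1)+(k-2)}$ I would place $k-2$ leaves whose lists force the colors $d_1,\dots,d_{k-2}$, leaves carrying the $\{i,\text{trigger}\}$-type lists of Lemma~\ref{lem: Add} (padded to size $k$ by absorbing colors), and filler lists. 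The design target is that once the absorbing colors are used to their full quota of $\rho$, the still-uncolored part of $G$ is a copy of $K_{1,\rho-1}+K_{1,\rho-1}$ (note $2(\rho-1)=2k^3-2k+4\ge 16$) carrying exactly the bad $2$-assignment of Lemma~\ref{lem: Add}.

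\textit{The argument.} Given a purported equitable $L$-coloring $f$, I would analyze the colors on the two centers, exactly as in Lemma~\ref{lem: Add}. Counting against the bound $\rho$ should force each absorbing color $d_i$ to be used $\rho$ times (one of those uses on a leaf of $K_{1,k^3}$, the rest on leaves of $K_{1,m_2}$), peeling $G$ down to the two-star instance of order $2(\rho-1)$ with the Lemma~\ref{lem: Add} assignment; it should force each main color to sit at $\rho-1$ after the leaves of $K_{1,m_2}$; and it should force the center of $K_{1,k^3}$ onto a trigger color. Then whichever trigger color the center takes, at least two of the specially-listed leaves of $K_{1,k^3}$ are driven onto a main color already at $\rho-1$, yielding a color used $\ge\rho+1$ times---the contradiction. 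For $k=2$ one simply invokes Lemma~\ref{lem: Add}.

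\textit{The main obstacle.} The delicate step is the design of $L$ itself. In Lemma~\ref{lem: Add} the lists have size $2$, so a leaf with list $\{i,j\}$ whose neighbor already uses $j$ is rigid---it must take $i$. For $k\ge 3$ every vertex keeps at least $k-1$ options in any partial proper coloring, so no vertex is individually rigid and the forcing must be produced collectively: the ``escape'' colors in a leaf's $k$-element list have to be already exhausted. Coordinating the main, trigger, and absorbing colors so that all the relevant budgets are simultaneously pinned ($\rho$ on each absorbing color, $\rho-1$ on each main color, and $0$ once a trigger color is spent by the center), and so that \emph{every} one of the $k$ candidate colors for the center of $K_{1,k^3}$ leads to overflow, is where the real work lies---and it is precisely here that the specific values $k^3$ and $(k-1)(k^3-k+2)$, and the identities $m_1=(\rho-1)+(k-2)$, $m_2=(k-1)(\rho-1)$, get used.
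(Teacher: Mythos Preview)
Your proposal is not a proof: you describe a plan for building a bad $k$-assignment, but you never actually build it, and you yourself say that ``the delicate step is the design of $L$ itself \dots\ is where the real work lies.'' That is precisely the content of the proposition. There is also a concrete obstruction to your scheme as stated: you want to ``confine'' the $m_2=(k-1)(\rho-1)$ leaves of the big star to the $k-1$ main colors, but every list has size $k$, so each such leaf must carry at least one color outside those $k-1$. Whatever that extra color is (trigger or absorbing), it gives the leaf an escape from the main palette, and then the claim that ``each main color must be used $\rho-1$ times on these leaves'' is no longer forced; you would have to argue simultaneously that all escapes are blocked by the $\rho$-budget, which is exactly the coordination problem you leave open.

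The paper's construction avoids this entirely and is much shorter. Put the constant list $[k]$ on \emph{all} of the big star $K_{1,(k-1)(\rho-1)}$; then once its center takes some color (say $1$), its $(k-1)(\rho-1)$ leaves are automatically confined to $\{2,\dots,k\}$---you get the confinement to $k-1$ colors for free from properness, with no extra colors to manage. Give the center $w_0$ of the small star $K_{1,k^3}$ the disjoint list $\{k+1,\dots,2k\}$, and give each of its $k^3$ leaves a list of the form $\{k+i\}\cup O_j$ where $O_j$ is a $(k-1)$-subset of $[k]$; there are $k\cdot k=k^2$ such lists, each assigned to exactly $k$ leaves. Now if $f$ is an equitable $L$-coloring with $f(u_0)=1$ and $f(w_0)=d\in\{k+1,\dots,2k\}$, look at the $k$ leaves of the small star whose list is $\{d\}\cup\{2,\dots,k\}$: they cannot use $d$, so they all land in $\{2,\dots,k\}$. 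Together with the big star's leaves this gives $(k-1)(\rho-1)+k=(k-1)\rho+1$ uses of the colors $2,\dots,k$, exceeding the total budget $(k-1)\rho$. No peeling, no absorbing colors, no reduction to the $k=2$ case---just one pigeonhole inequality.
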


Before we begin the proof, notice that if $G = K_{1, (k-1)(k^3-k+2)} + K_{1, k^3}$, then
$$\rho(G,k) = \left \lceil \frac{2+(k-1)(k^3-k+2) + k^3}{k} \right \rceil = k^3-k+3.$$
Also, $(k-1)(k^3-k+2) = (k-1)(\rho-1) = \rho(k-1) + 1-k \leq \rho(k-1) - 1$ which means that $G$ satisfies the first inequality in Theorem~\ref{thm: Main}. But for the second inequality, $m_1+m_2 - \rho(k-2) = 2(k^3-k+2)\ge 16$.

\begin{proof}
Let $G = K_{1, (k-1)(k^3-k+2)} + K_{1, k^3}$.  Suppose $G_1$ and $G_2$ are the components of $G$.  Moreover, suppose $G_1$ has bipartition $\{u_0\}$, $A= \{u_i : i \in [(k-1)(k^3-k+2)] \}$, and suppose $G_2$ has bipartition $\{w_0\}$, $B= \{w_i : i \in [k^3] \}$.  We will now construct a $k$-assignment $L$ for $G$ with the property that there is no equitable $L$-coloring of $G$.
	
For each $v \in V(G_1)$, let $L(v)= [k]$.  Also, let $L(w_0) = \{k+1, k+2, \ldots, 2k \}$.  Now, let $O = \{O_1, \ldots, O_k \}$ be the set of all $(k-1)$-element subsets of $[k]$.  Then, let $P = \{ \{k+i \} \cup O_j : i \in [k], j \in [k] \}$.  Clearly, $|P|=k^2$.  So, we can name the elements of $P$ so that $P = \{P_1, \ldots, P_{k^2} \}$.  Finally, for each $i \in [k^2]$ and $j \in [k]$, let $L(w_{(i-1)k+j})=P_i$.
	
Now, for the sake of contradiction, suppose that $f$ is an equitable $L$-coloring of $G$.  We know that $f$ uses no color more than $\rho = k^3-k+3$ times.  Without loss of generality, suppose that $f(u_0)=1$.  Then, for each $i \in \{2, \ldots, k \}$, let $a_i = |f^{-1}(i) \cap A|$.  Clearly, $\sum_{i=2}^k a_i = (k-1)(k^3-k+2) = (k-1)(\rho-1)$.  Now, suppose that $f(w_0)=d$, and without loss of generality assume that $w_1, \ldots, w_k$ are the $k$ vertices in $B$ that were assigned the list $\{d \} \cup \{2, \ldots, k \}$ by $L$.  Then, for each $i \in \{2, \ldots, k \}$, let $b_i = |f^{-1}(i) \cap \{w_j : j \in [k]\}|$.  Since $f(w_j) \neq d$ for each $j \in [k]$, we have that $\sum_{i=2}^k b_i = k$.  We also have that $a_i + b_i \leq |f^{-1}(i)| \leq \rho$ for each $i \in \{2, \ldots, k\}$.  So, we see that
$$ (k-1)\rho \geq \sum_{i=2}^k (a_i + b_i) = \sum_{i=2}^k a_i + \sum_{i=2}^k b_i = (k-1)(\rho-1) + k = (k-1)\rho + 1$$
which is a contradiction.
\end{proof}

Finally, we will show that the converse of Theorem~\ref{thm: Main} does not hold.

\begin{pro}\label{pro: counter}
$K_{1,8} + K_{1,9(k-1)-1}$ is equitably $k$-choosable for all $k \geq 3$.
\end{pro}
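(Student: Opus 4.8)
The plan is to mirror the proof of Theorem~\ref{thm: Main} and isolate the one place where it genuinely breaks. Fix $k \geq 3$ and let $G = K_{1,8} + K_{1,9(k-1)-1}$, with the copy of $K_{1,8}$ having bipartition $\{w_0\}$, $A = \{w_1,\dots,w_8\}$ and the copy of $K_{1,9(k-1)-1}$ having bipartition $\{u_0\}$, $B$. Since $|V(G)| = 9k$, we have $\rho(G,k) = 9$, and $9(k-1)-1 = \rho(k-1)-1$, so (as noted just before the proposition) $G$ meets the first inequality of Theorem~\ref{thm: Main} but not the second; hence that theorem does not apply. Let $L$ be an arbitrary $k$-assignment for $G$; I must produce an equitable $L$-coloring. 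First I would run the $\epsilon$-greedy process on $G$ and $L$. If no color appears in at least $\rho = 9$ of the lists $L_\epsilon$ assigns to $V(G_\epsilon) - \{w_0,u_0\}$, then Lemma~\ref{lem: thing1} finishes. Otherwise Lemma~\ref{lem: thing2} gives $|\Ran(g_\epsilon)| = k-2$, so $g_\epsilon$ colors exactly $9(k-2)$ leaves using $k-2$ colors nine times each, whence $g_\epsilon \in \mathcal{C}$ (the family from the proof of Theorem~\ref{thm: Main}). As there, choose $g : D' \to \mathcal{L}$ in $\mathcal{C}$ minimizing $\bigl||U_A^g| - |U_B^g|\bigr|$, set $\mu_A = |U_A^g|$, $\mu_B = |U_B^g|$, $G' = G - D'$, and $L'(v) = L(v) - g(D')$ for $v \in V(G')$; then $|L'(v)| \geq 2$ everywhere, $\mu_A + \mu_B = (9k-2) - 9(k-2) = 16$, and $\mu_A \leq |A| = 8$. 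We may also assume some color $\gamma$ lies in at least $9$ of the lists $L'$ assigns to $U_A^g \cup U_B^g$, as otherwise we finish as in the proof of Lemma~\ref{lem: thing1}.

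Next I would split on $\mu_A$. If $\mu_A \leq 7$, then $U_A^g \neq A$ and $\mu_B - \mu_A \geq 9 - 7 = 2$, putting us exactly in the first subcase of Case~2 of the proof of Theorem~\ref{thm: Main}: the minimality of $\bigl||U_A^g|-|U_B^g|\bigr|$ forces $|L'(b)| \geq 3$ for every $b \in U_B^g$ (swap the color of some $w \in A - U_A^g$ onto $b$), and since $(\mu_A+\mu_B+2)/2 = 9 = \rho$ and $\gamma$ is heavy on $U_A^g \cup U_B^g$, Lemma~\ref{lem: reduce} yields a proper $L'$-coloring of $G'$ using no color more than $9$ times; combining it with $g$ (whose color set is disjoint from $\mathcal{L}(L')$) gives an equitable $L$-coloring of $G$. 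Note this subsumes $\mu_A = 0$, where $G' = K_1 + K_{1,16}$ and Lemma~\ref{lem: reduce} is invoked with its first parameter $0$. This leaves the single case $\mu_A = \mu_B = 8$: here $G' = K_{1,8} + K_{1,8}$, $|L'(v)| \geq 2$ for all $v$, and $\gamma$ lies in at least $9$ of the $16$ leaf-lists, and I must exhibit a proper $L'$-coloring of $G'$ using no color more than $9$ times.

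The main obstacle is precisely this last case, because $K_{1,8}+K_{1,8}$ is \emph{not} equitably $2$-choosable (Theorem~\ref{thm: 2star}): one cannot just trim $L'$ to a $2$-assignment and quote that theorem, so the extra hypotheses must be used directly. My plan is a short case analysis in the spirit of Lemmas~\ref{lem: Good} and~\ref{lem: Add}. Let $S = \{v : v \text{ a leaf of } G',\ \gamma \in L'(v),\ |L'(v)| = 2\}$. If $|S| \leq 9$, color every vertex of $S$ with $\gamma$ and then, if necessary, color further leaves having $\gamma$ in their list with $\gamma$ until $\gamma$ has been used exactly $9$ times (possible since at least $9$ leaves contain $\gamma$); every still-uncolored leaf now has at least two available colors other than $\gamma$, so after coloring $w_0$ and $u_0$ with colors different from $\gamma$ each remaining leaf still has an allowed color, and since only the $9$ remaining vertices carry colors other than $\gamma$, no color is used more than $9$ times. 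If $|S| \geq 10$, then $S$ meets each of $A$ and $B$ in at least two leaves; I would color $w_0$ and $u_0$ so as to force as few $S$-leaves onto $\gamma$ as possible (taking a center's color off a size-two list $\{\gamma,\cdot\}$ when forced), then color each $S$-leaf by $\gamma$ or its unique alternative and each non-$S$-leaf by a color other than $\gamma$, and verify in the few resulting configurations that $\gamma$ is used at most $9$ times and no other color exceeds $9$ — the tight configuration being $L'(w_0) = L'(u_0) = \{\gamma,y\}$ with all of $S$ equal to $\{\gamma,y\}$, where coloring $w_0 = y$, $u_0 = \gamma$ and splitting the $S$-leaves accordingly produces a $9$–$9$ split. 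In all cases, combining the proper $L'$-coloring of $G'$ with $g$ gives the desired equitable $L$-coloring, completing the proof.
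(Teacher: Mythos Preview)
Your reduction to the residual problem on $G' = K_{1,8}+K_{1,8}$ is fine up through the case split on $\mu_A$, and your $|S|\le 9$ argument is correct.  The gap is the $|S|\ge 10$ subcase: your plan there is not merely sketchy, it is impossible as stated.  Concretely, take $k=3$ and the $3$-assignment
\[
L(v)=\{1,2,5\}\ (v\in\{u_0\}\cup B),\quad L(w_0)=\{3,4,5\},
\]
\[
L(w_1)=L(w_2)=\{1,3,5\},\ L(w_3)=L(w_4)=\{1,4,5\},\ L(w_5)=L(w_6)=\{2,3,5\},\ L(w_7)=L(w_8)=\{2,4,5\}.
\]
One element of $\mathcal{C}$ minimizing $\bigl||U_A^g|-|U_B^g|\bigr|$ is the map $g$ that colours nine leaves of $B$ with $5$; this gives $\mu_A=\mu_B=8$ and $L'(v)=L(v)-\{5\}$.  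But then $(G',L')$ is \emph{exactly} the $2$-assignment from Lemma~\ref{lem: Add} (with $m_1=m_2=8$), and that lemma proves there is \emph{no} proper $L'$-colouring of $G'$ using every colour at most $9$ times.  So for this legitimate choice of $g$ your final step cannot be completed; your ``tight configuration'' misses the case $L'(w_0)\cap L'(u_0)=\emptyset$, which is precisely the obstruction.

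The underlying issue is that fixing a single extremal $g$ and then working entirely inside $G'$ throws away information you need.  The paper's proof does not try to colour one $G_P$ in isolation; instead it varies the $(k-2)$-set $P$ over a family $\mathcal{S}$ and uses the \emph{failure} of the residual problem for every $P$ to force strong global structure on $L$ (each colour of $S'=\bigcup_{P\in\mathcal{S}}P$ lies in every leaf list, and $|S'|=k-1$), after which an explicit equitable $L$-colouring of $G$ is written down directly.  In the example above this machinery detects that $5$ is the wrong colour to spend first and instead exploits the colours $1$ and $2$.  To repair your argument you would need, at minimum, a principled way to pick $g$ (not just minimizing $|\mu_A-\mu_B|$) that provably avoids the Lemma~\ref{lem: Add} obstruction, or to adopt the paper's multi-$P$ strategy.
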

Notice that $8 + 9(k-1)-1 > 15 +9(k-2)$.  So, this graph does not satisfy the second inequality in Theorem~\ref{thm: Main}. The proof illustrates how ideas from the proof of Theorem~\ref{thm: Main} can be applied even in this situation.

\begin{proof}
Let $G = K_{1,8} + K_{1,9(k-1)-1}$, and let the components of $G$ be $G_1$ and $G_2$. Suppose the bipartition of $G_1$ is $\{w_0 \}$, $A= \{w_1, \ldots, w_{8} \}$ and the bipartition of $G_2$ is $\{u_0 \}$, $B=\{u_1, \ldots, u_{9(k-1)-1} \}$. Let $L$ be an arbitrary $k$-assignment for $G$. Note that $\rho(G,k) = 9$. For the sake of contradiction, suppose that $G$ is not equitably $L$-colorable. Let $S$ be the set containing all colors that appear in at least $9$ of the lists associated with the vertices in $A \cup B$. Suppose we run the $\epsilon$-greedy process on $G$ and $L$.

\emph{Observation 1}: $|S| \geq k-2$. Suppose that $|S| < k-2$. Note that $|\text{Ran}(g_\epsilon)| < k-2$. By Lemma~\ref{lem: thing2} we know that there is no color that appears in at least 9 of the lists assigned by $L_\epsilon$ to the vertices in $V(G_\epsilon)-\{u_0,w_0\}$. Therefore, by Lemma~\ref{lem: thing1} we know that $G$ is equitably $L$-colorable which is a contradiction.

For each $(k-2)$-element subset $P$ of $S$ let $\mathcal{C}_P$ denote the set of all partial $L$-colorings $f: D \rightarrow \mathcal{L}$ of $G$ such that $D \subset A \cup B$, $|D| = 9(k-2)$, $f(D) = P$, and each color class associated with $f$ is of size $9$. For each $f \in \mathcal{C}_P$ let $U_A^f$ (resp., $U_B^f$) be the set of vertices in $A$ (resp., $B$) not colored by $f$. Note that $U_A^f$ and $U_B^f$ are dependent on the choice of $P$.

\emph{Observation 2}: \emph{$|\Ran(g_\epsilon)| = k-2$ and $\mathcal{C}_{\Ran(g_\epsilon)} \neq \emptyset$}. This is easy to verify by assuming that  $|\text{Ran}(g_\epsilon)| < k-2$ and proceeding as we did in the proof of Observation 1.

Let $\mathcal{S}$ be the set containing all sets $P$ that are $(k-2)$-element subsets of $S$ and satisfy $\mathcal{C}_P \neq \emptyset$.  Observation 2 implies that $\mathcal{S}$ is nonempty.  Also, let $S' = \bigcup_{P \in \mathcal{S}} P $. For each $P \in \mathcal{S}$, let $g_P: D_P \rightarrow \mathcal{L}$ be a function chosen from the elements of $\mathcal{C}_P$ so that $||U_A^{g_P}| -|U_B^{g_P}||$ is as small as possible. We will write $g$ instead of $g_P$ when $P$ is clear from context. Let $G_P = G -D_P$. Note that it is possible for $|U_A^g| = 0$. Also note that $G_P$ is a copy of $K_{1,|U_A^g|} + K_{1,|U_B^g|}$. Let $L_P(v) = L(v) - P$ for all $v \in V(G_P)$.

\emph{Observation 3}: \emph{If $c \notin S'$ then $c$ is not in 9 of the lists associated with the vertices in $U_A^{g_P} \cup U_B^{g_P}$ by $L_P$ for each $P \in \mathcal{S}$}. Suppose that $ c \notin S'$ and $c$ is in 9 of the lists associated with the vertices in $U_A^{g_T} \cup U_B^{g_T}$ by $L_T$ for some $T \in \mathcal{S}$. Also suppose that $t_1 \in T$. We modify $g_T$ as follows: we uncolor the vertices that are colored with $t_1$ and color 9 of the vertices in $U_A^{g_T} \cup U_B^{g_T}$ that have $c$ in their original lists with $c$. Let $T' = (T \cup \{c\}) - \{t_1\}$. Clearly we see that this new partial coloring of $G$ is in $\mathcal{C}_{T'}$. Therefore it must be that $c \in S'$ which is a contradiction.

\emph{Observation 4}: $|S'| \geq k-1$. Suppose that $|S'|< k-1$.  This implies $|S'| = k-2$ which implies that $|\mathcal{S}| = 1$, and let $P$ be the element in $\mathcal{S}$. By Observation~3, there is no color that appears in at least $9$ of lists associated with the vertices in $U_A^g\cup U_B^g$ by $L_P$. Also note that $|L_P(v)| \geq 2$ for all $v \in V(G_P)$, and $G_P$ is 2-choosable. So, we know that $G_P$ is equitably $L_P$-colorable. Such a coloring of $G_P$ combined with $g_P$ completes an equitable $L$-coloring of $G$ which is a contradiction.

\emph{Observation 5}: \emph{For all $P \in \mathcal{S}$, $G_P = K_{1,8}+K_{1,8}$}. Suppose that there exists a $P \in \mathcal{S}$ such that $G_P \neq  K_{1,8}+K_{1,8}$. Since $||U_A^g| -|U_B^g||$ is as small as possible, we know that $|L_P(v)| \geq 3$ for all $v \in U_B^g$ (by an argument similar to that of case two in Theorem~\ref{thm: Main}). Note that $|U_B^g|-|U_A^g|  \geq 2$. In the case that there is no color that appears in at least $9$ of lists associated by $L_P$ with the vertices in $U_A^g\cup U_B^g$ we can complete an equitable $L$-coloring of $G$ as we did in Observation 4. Otherwise by Lemma~\ref{lem: reduce} we know that there exists a proper $L_P$-coloring of $G_P$ that uses a color no more than 9 times. Such a coloring of $G_P$ combined with $g_P$ completes an equitable $L$-coloring of $G$ which is a contradiction.

\emph{Observation 6}: \emph{For all $P \in \mathcal{S}$, $|L_P(v)| = 2$ for all $v \in U_A^g \cup U_B^g$. Consequently, $P \subseteq L(v)$ for all $v \in U_A^g \cup U_B^g$.} Suppose that for some $P \in \mathcal{S}$ there exists a $v' \in U_A^g \cup U_B^g$ such that $|L_P(v')| \geq 3$. Without loss of generality we suppose that $v' \in U_A^g$ (this is permissible by Observation~5). Let $G_P' = G_P-\{v'\}$, and note that $G'_P$ is a copy of $K_{1,7} + K_{1,8}$. Also let $L'_P(v) = L_P(v)$ for all $v \in V(G'_P)$. We arbitrarily remove colors from $L'_P(v)$ until $|L'_P(v)|=2$ for all $v \in V(G'_P)$. We know by Theorem~\ref{thm: 2star} that $G'_P$ is equitably 2-choosable which implies that there exists an equitable $L'_P$-coloring $h$ of $G'_P$. Note that there can exist at most one color $c \in h(V(G'_P))$ such that $|h^{-1}(c)| = 9$. If there is such a color remove it from $L_P(v')$, and also remove $h(w_0)$ from $L_P(v')$ if $h(w_0) \in L_P(v')$.  Coloring $v'$ with a color still in $L_P(v')$ completes an equitable $L$-coloring of $G$.

\emph{Observation 7}: \emph{For all $P \in \mathcal{S}$, $P \subseteq L(v)$ for each $v \in A \cup B$.  Consequently, $S' \subseteq L(v)$ for each $v \in A \cup B$.}  Suppose $P \in \mathcal{S}$.  If $v \in A$, it is clear that $P \subseteq L(v)$ by Observation~6 since Observation~5 implies $A = U_A^g$.  So, suppose for the sake of contradiction that $v' \in A \cup B$ has the property that $P$ is not a subset of $L(v')$.  We know that $v' \in B$, and Observation~6 implies that $v' \in B - U_B^g$.  So, $v' \in D_P$, and $g(v') \in P$.  Now, modify $g$ as follows.  Color an element $w \in U_B^g$ with $g(v')$, and remove the color $g(v')$ from $v'$.  We know the resulting coloring is still a partial $L$-coloring of $G$ by Observation~6.  Let $G'$ be the subgraph of $G$ induced by the vertices of $G$ not colored by this partial $L$-coloring.  Notice $G' = K_{1,8} + K_{1,8}$.  Let $L'(v) = L(v)-P$ for each $v \in V(G')$.  Clearly, $|L'(v')| \geq 3$.  So, we can complete an equitable $L$-coloring of $G$ by following the argument in Observation~6.  This however is a contradiction.

We note that Observation 7 implies that $|S'| \leq k$.

\emph{Observation 8}: \emph{$|S'| \neq k$.  Consequently, $|S'|=k-1$.} Suppose that $|S'| =k$, and $S' = \{c_1,c_2,c_3, \ldots , c_k\}$. By Observation~7 we know that $L(v) = \{c_1,c_2,c_3,\ldots , c_k\}$ for all $v \in A \cup B$. Color $G$ as follows:
$$h(v) = \begin{cases}
c_i \text{ if } v \in \{u_j:1 + 9(i-1) \leq j \leq 9i\} \text{ where } i \in [k-2]\\
c_{k-1} \text{ if } v \in A\\
c_k \text{ if } v \in \{u_j: 1+9(k-2) \leq j \leq 9(k-1)-1\}\\
c' \text{ if } v = w_0\\
c'' \text{ if } v = u_0
\end{cases}$$ where $c' \in L(w_0)-\{c_1, \ldots, c_{k-1}\}$ and $c'' \in L(u_0)-\{c_1, \ldots, c_{k-2}, c_k\}$. Notice that $h$ is an equitable $L$-coloring of $G$ which is a contradiction.

Now we will complete the proof. By Observation~8 we may suppose that $S' = \{c_1,c_2,\ldots , c_{k-1}\}$.  We know that either: (1) $(L(u_0) \cup L(w_0)) \cap S' \neq \emptyset$ or (2) $(L(u_0) \cup L(w_0)) \cap S' = \emptyset$.  We handle the first case by considering sub-cases where $L(u_0)$ contains an element of $S'$ and where $L(w_0)$ contains an element of $S'$.  First, without loss of generality suppose $c_{k-1} \in L(u_0)$, and color $G$ according to the function $h$ defined as follows:
$$h(v) = \begin{cases}
c_i \text{ if } v \in \{u_j:1 + 9(i-1) \leq j \leq 9i\} \text{ where } i \in [k-2]\\
c_{k-1} \text{ if } v \in A \cup\{u_0\}\\
d_j \text{ if } v \in \{u_j: 9(k-2)+1 \leq j \leq 9(k-1)-1\}\\
c' \text{ if } v = w_0
\end{cases}$$ where $c' \in L(w_0)-\{c_1, \ldots, c_{k-1}\}$ and $d_j \in L(u_j)-\{c_1, \ldots, c_{k-1}\}$ for each $9(k-2)+1 \leq j \leq 9(k-1)-1$.  Clearly, $h$ is an equitable $L$-coloring of $G$ which is a contradiction.  Second, without loss of generality suppose $c_{k-1} \in L(w_0)$, and color $G$ according to $h$ defined as follows:
$$h(v) = \begin{cases}
c_i \text{ if } v \in \{u_j:1 + 9(i-1) \leq j \leq 9i\} \text{ where } i \in [k-2]\\
c_{k-1} \text{ if } v \in \{u_j: 9(k-2)+1 \leq j \leq 9(k-1)-1\} \cup\{w_0\}\\
d_j \text{ if } v \in A\\
c' \text{ if } v = u_0
\end{cases}$$ where $c' \in L(u_0)-\{c_1, \ldots, c_{k-1}\}$ and $d_j \in L(w_j)-\{c_1, \ldots, c_{k-1} \}$ for each $j \in [8]$. Clearly $h$ is an equitable $L$-coloring of $G$ which is a contradiction.

In the second case, suppose that $L(u_0) = \{c'_1,c'_2,c'_3, \ldots , c'_k\}$ and $L(w_0) = \{c''_1,c''_2,c''_3, \ldots , c''_k\}$. Without loss of generality assume $P = \{c_2,c_3,\ldots, c_{k-1}\} \in \mathcal{S}$. We begin by coloring vertices in $A \cup B$ according to $g_P$.  By Observation~5, we know that $U_A^g = A$.  Note that $$\sum _{i \in [k]}|L_P^{-1} ( \{c_1,c'_i\}) \cap U_B^g| \leq 8 \text{ and } \sum _{i \in [k]} |L_P^{-1} ( \{c_1,c''_i\}) \cap A| \leq 8.$$ So without loss of generality assume $|L_P^{-1}(\{c_1,c'_1\}) \cap U_B^g| \leq \lfloor 8 / k \rfloor$ and $|L_P^{-1}(\{c_1,c''_1\}) \cap A| \leq \lfloor 8 / k \rfloor$. Color all vertices in $(L_P^{-1}(\{c_1,c'_1\}) \cap U_B^g) \cup (L_P^{-1}(\{c_1,c''_1\}) \cap A)$ with $c_1$, color $u_0$ with $c'_1$, and color $w_0$ with $c''_1$. Note that we used $c_1$ at most $2\lfloor 8 / k \rfloor$ times which is clearly less than 9. So we arbitrarily color uncolored vertices with $c_1$ until exactly 9 vertices are colored with $c_1$ (this is possible by Observation~7). Let $U$ be the set containing all uncolored vertices in $A \cup U_B^g$. Let $$L_P'(v) = \begin{cases}
L_P(v) - \{c_1,c'_1\} \text{ if } v \in (U_B^g \cap U)\\
L_P(v) - \{c_1,c''_1\} \text{ if } v \in (A \cap U)
\end{cases}.$$ Note that $|U| = 7$, and $|L'_P(v)| \geq 1$ for each $v \in U$.  So, we can color each $v \in U$ with a color in $L'_P(v)$. This completes an equitable $L$-coloring of $G$ which is a contradiction.
\end{proof}

\vspace{5mm}

\noindent \textbf{Acknowledgment:}  The authors would like to thank Marcus Schaefer for his helpful comments on this paper.  The authors would also like to thank Paul Shin for helpful conversations.

\end{document}